\documentclass[12pt, reqno]{amsart}

\usepackage{amsfonts,amssymb,latexsym,amsmath, amsxtra}
\usepackage{verbatim}

\pagestyle{myheadings}
\textheight=7 true in \textwidth=6 true in  
\hoffset=-0.5true in

\usepackage[OT2,T1]{fontenc}
\DeclareSymbolFont{cyrletters}{OT2}{wncyr}{m}{n}
\DeclareMathSymbol{\Sha}{\mathalpha}{cyrletters}{"58}

\theoremstyle{plain}
\newtheorem{theorem}{Theorem}[section]
\newtheorem*{theorem*}{Theorem}
\newtheorem{corollary}[theorem]{Corollary}
\newtheorem{conjecture}[theorem]{Conjecture}
\newtheorem{lemma}[theorem]{Lemma}
\newtheorem{proposition}[theorem]{Proposition}
\newtheorem*{conjecture*}{Conjecture}

\theoremstyle{definition}

\theoremstyle{remark}

\newtheorem*{remark*}{Remark}
\newtheorem*{remarks*}{Remarks}

\numberwithin{equation}{section}

\newcommand{\R}{\mathbb R}
\newcommand{\N}{\mathbb N}
\newcommand{\Z}{\mathbb Z}
\newcommand{\C}{\mathbb C}

\makeatletter
\newcommand*{\rom}[1]{\expandafter\@slowromancap\romannumeral #1@}
\makeatother

\def\({\left(}
\def\){\right)}

\newcommand{\re}[1]{\text{Re}\(#1\)}

\begin{document}

\title
[On Dyson's crank conjecture]{On Dyson's crank conjecture and the uniform asymptotic behavior of certain inverse theta functions}
\author{Kathrin Bringmann}
\author{Jehanne Dousse}
\address{Mathematical Institute\\University of
Cologne\\ Weyertal 86-90 \\ 50931 Cologne \\Germany}
\email{kbringma@math.uni-koeln.de}
\address{LIAFA \\
Universite Denis Diderot - Paris 7 \\
75205 Paris Cedex 13 \\
FRANCE }
\email{jehanne.dousse@liafa.univ-paris-diderot.fr}

\subjclass[2010] {05A17, 11F03, 11F30, 11F50, 11P55, 11P82}

\date{\today}

\begin{abstract}
In this paper we prove a longstanding conjecture by Freeman Dyson concerning the limiting shape of the crank generating function. We fit this function in a more general family of inverse theta functions which play a key role in physics.

\end{abstract}

\maketitle

\section{Introduction and statement of results}
Dyson's crank was introduced to explain Ramanujan's famous partition congruences with modulus $5$, $7$, and $11$. Denoting for $n\in\N$ by $p(n)$ the number of integer partitions of $n$, Ramanujan \cite{Ra} proved that for $n\geq 0$
\begin{align*}
p\left( 5n+4\right) &\equiv 0 \pmod{5},\\
p\left( 7n+5\right) &\equiv 0 \pmod{7},\\
p\left( 11n+6\right) &\equiv 0 \pmod{11}.
\end{align*}
A key ingredient of his proof is the modularity of the partition generating function
$$
P(q):= \sum_{n= 0}^\infty p(n) q^n = \frac{1}{\left( q; q\right)_\infty} = \frac{q^{\frac{1}{24}}}{\eta (\tau)},
$$
where for $j\in\N_0 \cup \{\infty\}$ we set $(a)_j = (a ; q)_j := \prod_{\ell=0}^{j-1} (1-a q^\ell)$, $q:= e^{2\pi i \tau}$, and $\eta (\tau) := q^{\frac{1}{24}} \prod_{n=1}^\infty (1-q^n)$ is Dedekind's $\eta$-function, a modular form of weight $\frac12$.

Ramanujan's proof however gives little combinatorial insight into why the above congruences hold. In order to provide such an explanation, Dyson \cite{Dy1} famously introduced the {\it rank} of a partition, which is defined as its largest part minus the number of its parts. He conjectured that the partitions of $5n+4$ (resp. $7n+5$) form $5$ (resp. $7$) groups of equal size when sorted by their ranks modulo $5$ (resp. $7$). This conjecture was proven by Atkin and Swinnerton-Dyer \cite{AS}. Ono and the first author \cite{BO1} showed that partitions with given rank satisfy also Ramanujan-type congruences. Dyson further postulated the existence of another statistic which he called the ``crank'' and which should explain all Ramanujan congruences. The crank was later found by Andrews and Garvan \cite{AG, Ga}. If for a partition $\lambda$, $o(\lambda)$ denotes the number of ones in $\lambda$, and $\mu(\lambda)$ is the number of parts strictly larger than $o(\lambda)$, then the \emph{crank} of $\lambda$ is defined as
$$
\text{crank} (\lambda) := \left\{ \begin{array}{cc}  \text{largest part of $\lambda$}& \text{if $o(\lambda) =0$,} \\ \mu (\lambda) - o(\lambda) & \text{if $o(\lambda)>0$.} \end{array}\right.
$$
Denote by $M(m,n)$ the number of partitions of $n$ with crank $m$. Mahlburg \cite{Ma} then proved that partitions with fixed crank also satisfy Ramanujan-type congruences.
In this paper, we solve a longstanding conjecture by Dyson \cite{Dy2} concerning the limiting shape of the crank generating function.

\begin{conjecture} [Dyson]
As $n\to\infty$ we have
$$
M\left( m,n \right) \sim \frac14 \beta \mathrm{sech}^2 \left( \frac12 \beta m \right) p(n)
$$
with $\beta:= \frac{\pi}{\sqrt{6n}}$. 
\end{conjecture}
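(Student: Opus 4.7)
The plan is to apply Wright's variant of the circle method to the two-variable generating function
\begin{equation*}
C(\zeta, q) := \sum_{m,n} M(m,n) \zeta^m q^n = \frac{(q)_\infty}{(\zeta q)_\infty (\zeta^{-1} q)_\infty},
\end{equation*}
which, by Jacobi's triple product identity, equals $2 q^{1/8} \sin(\pi z) (q)_\infty^2/\vartheta_1(z; \tau)$ with $\zeta = e^{2\pi i z}$ and $q = e^{2\pi i \tau}$. Thus $C$ is, up to elementary factors, the reciprocal of a Jacobi theta function---an \emph{inverse theta function}---so the modular transformations of $\eta$ and $\vartheta_1$ under $\tau\mapsto -1/\tau$ should yield a controlled asymptotic expansion as $\tau \to 0$. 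Two applications of Cauchy's theorem give
\begin{equation*}
M(m,n) = \frac{e^{\beta n}}{(2\pi)^2} \int_{-\pi}^{\pi}\!\int_{-\pi}^{\pi} C\!\left(e^{i\phi}, e^{-\beta + i\theta}\right) e^{-im\phi - in\theta}\, d\phi\, d\theta,
\end{equation*}
and I would choose $\beta = \pi/\sqrt{6n}$, the Hardy--Ramanujan saddle radius.

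The heart of the matter---and what I expect to be the main obstacle---is a uniform asymptotic expansion of $C(e^{i\phi}, e^{-\beta + i\theta})$ valid simultaneously in $\phi \in (-\pi,\pi]$ and in $\theta$ on the major arc as $\beta \to 0$. Transforming both $\eta$ and $\vartheta_1$ produces the target shape
\begin{equation*}
C\!\left(e^{i\phi}, e^{-\beta}\right) \sim \sqrt{\frac{2\pi}{\beta}}\; \frac{\sin(\phi/2)}{\sinh(\pi \phi/\beta)}\; \exp\!\left(\frac{\pi^2}{6\beta} + \frac{\phi^2}{2\beta} - \frac{\beta}{24}\right).
\end{equation*}
The subtlety is that the error term has to be controlled in both the ``cusp variable'' $\theta$ and the ``character variable'' $\phi$, and uniformly across the whole range $|\phi|\le\pi$ (not merely near $\phi=0$). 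This is likely stated as a standalone theorem in the paper---it is presumably the ``uniform asymptotic behavior of certain inverse theta functions'' advertised in the title---and then specialized to the crank.

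Once the uniform expansion is available, the remainder is a saddle-point calculation. On the minor arc ($|\theta|$ large), standard estimates of $\eta$-quotients force an exponentially smaller contribution. On the major arc, the expansion $\pi^2/(6(\beta - i\theta)) = \pi^2/(6\beta) + i\pi^2 \theta/(6\beta^2) - \pi^2 \theta^2/(6\beta^3) + \cdots$ shows that the choice $\beta = \pi/\sqrt{6n}$ kills the linear term against $-in\theta$, and the resulting Gaussian in $\theta$ integrates to produce the familiar Hardy--Ramanujan prefactor
\begin{equation*}
p(n) \sim \frac{1}{4n\sqrt 3} \exp\!\left(\pi\sqrt{2n/3}\right).
\end{equation*}
The substitution $\phi = \beta u$ together with $\sin(\beta u/2) \sim \beta u/2$ then reduces the $\phi$-integral to the classical Fourier identity
\begin{equation*}
\int_{-\infty}^\infty \frac{u\, e^{-im\beta u}}{\sinh(\pi u)}\, du = \tfrac{1}{2}\, \mathrm{sech}^2\!\left(\tfrac{m\beta}{2}\right),
\end{equation*}
and multiplying out all constants yields exactly $\tfrac14 \beta\, \mathrm{sech}^2(\beta m/2)\, p(n)$, as claimed.
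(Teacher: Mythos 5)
Your proposal follows essentially the same route as the paper: Wright's circle method applied to the $m$-th Fourier coefficient of the inverse theta function, the modular transformations of $\eta$ and $\vartheta$ producing exactly the limiting shape you write down, and the Fourier transform of $u/\sinh(\pi u)$ yielding the $\mathrm{sech}^2$ (which the paper derives via Euler polynomials in Lemmas 2.2, 2.3, and 3.2). The uniform two-variable error control that you correctly flag as the main obstacle is indeed where the bulk of the paper's work lies (the split into $\mathcal{G}_{m,1}$ and $\mathcal{G}_{m,2}$, the bounds away from the dominant pole, and the Bessel-function evaluation of the major arc), and your sketch of the remaining saddle-point computation matches the paper's.
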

Dyson then asked the question about the precise range of $m$ in which this asymptotic holds and about the error term.
In this paper, we answer all of these questions.

\begin{theorem}\label{crank Theorem}
The Dyson-Conjecture is true. To be more precise, if $|m|\leq \frac1{\pi\sqrt{6}}\sqrt{n}\log n$, we have as $n\to\infty$
\begin{equation}\label{crankas1}
M(m, n)=\frac{\beta}{4}\mathrm{sech}^2\left(\frac{\beta m}{2}\right) p(n)\left(1+O\left(\beta^{\frac12}|m|^{\frac13}\right)\right).
\end{equation}
\end{theorem}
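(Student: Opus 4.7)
The plan is a two-variable saddle-point analysis on the crank generating function
$$C(x;q) := \sum_{m,n} M(m,n)\, x^m q^n = \frac{(q)_\infty}{(xq)_\infty (x^{-1}q)_\infty}.$$
By Cauchy's theorem one extracts
$$M(m,n) = \frac{1}{2\pi}\int_{-\pi}^{\pi} \bigl([q^n]\, C(e^{i\theta}; q)\bigr)\, e^{-im\theta}\, d\theta,$$
so the task splits into (a) obtaining a uniform-in-$\theta$ asymptotic for the inner coefficient $[q^n]\,C(e^{i\theta};q)$, and (b) evaluating the resulting $\theta$-integral.

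For step (a), the central tool is a uniform asymptotic for the inverse theta expression $(xq)_\infty (x^{-1}q)_\infty$ as $q\to 1^-$. Via the Jacobi triple product this product equals $\vartheta_1(z;\tau)/(2q^{1/8}\sin(\pi z))$ with $x = e^{2\pi i z}$, $q = e^{2\pi i\tau}$, and the modular transformations of $\eta$ and $\vartheta_1$ under $\tau \mapsto -1/\tau$ convert $q\to 1$ into the tame regime $-1/\tau \to i\infty$. A direct expansion then yields, with $q = e^{-\beta}$ and $x = e^{i\theta}$,
$$C(e^{i\theta};e^{-\beta}) \;\sim\; \frac{1}{(e^{-\beta})_\infty}\cdot\frac{\pi\theta/\beta}{\sinh(\pi\theta/\beta)},$$
uniformly for $\theta$ in a suitable range, with a quantified error term. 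Feeding this into Wright's circle method in $q$ (with saddle $\beta = \pi/\sqrt{6n}$) gives
$$[q^n]\, C(e^{i\theta};q) \;\sim\; p(n)\cdot\frac{\pi\theta/\beta}{\sinh(\pi\theta/\beta)},$$
with a controlled relative error.

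For step (b), substitute $\theta = \beta u$ to turn the $\theta$-integral into
$$\frac{\beta\, p(n)}{2\pi}\int_{-\pi/\beta}^{\pi/\beta}\frac{\pi u}{\sinh(\pi u)}\, e^{-i\beta m u}\, du,$$
whose integrand decays like $u e^{-\pi u}$ in $|u|$, so the truncation error is exponentially small in $1/\beta$. Extending to all of $\R$ and applying the Fourier identity
$$\int_{-\infty}^\infty \frac{u}{\sinh(\pi u)}\, e^{-isu}\, du \;=\; \tfrac12\,\mathrm{sech}^2(s/2),$$
(obtainable by residue calculus) with $s = \beta m$ produces exactly the main term $\tfrac{\beta}{4}\,\mathrm{sech}^2(\beta m/2)\,p(n)$ predicted by Dyson.

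The main obstacle will be uniformity in $m$. For $|m|$ up to $\sqrt{n}\log n/(\pi\sqrt{6})$ one has $\beta m\asymp \log n$, so $\mathrm{sech}^2(\beta m/2)$ decays polynomially in $n$, and every error must be tracked sharply to keep the relative error $O(\beta^{1/2}|m|^{1/3}) = O(n^{-1/12}(\log n)^{1/3})$. This forces sharp remainder bounds both in the inverse-theta asymptotic of step (a) and in the Wright saddle-point evaluation, together with a minor-arc estimate bounding $|(xq)_\infty(x^{-1}q)_\infty|$ from below uniformly for $\theta$ away from $0$. Securing this uniformity---rather than any individual ingredient---will be the delicate technical heart of the argument, consonant with the paper's theme of uniform asymptotics for inverse theta functions.
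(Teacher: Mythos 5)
Your outline is essentially the paper's double integral taken in the opposite order, and the reversal is where the trouble lies. The paper also passes to $\eta$ and $\vartheta$ via \eqref{seeJacobi}, applies the transformations of Lemma \ref{LemmaTrans} near $q=1$, recovers $\mathrm{sech}^2(\beta m/2)$ as the Fourier transform of $w/\sinh(\pi w)$ (this is the combined content of Lemmas \ref{2.2} and \ref{EulerLemma}), and finishes with Wright's circle method. But it performs the $w$-integral (your $\theta$-integral) \emph{first}, obtaining the one-variable series $\mathcal{C}_{m,k}(q)$ whose expansion near $q=1$ already contains the factor $\mathrm{sech}^2(\beta m/2)$ in its main term (Lemma \ref{MainGm1}), and only then runs the circle method in $q$. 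You propose to run the circle method in $q$ at each fixed $\theta$ and integrate over $\theta$ afterwards, and your intermediate claims do not hold as stated.

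Concretely: (1) The Jacobi transformation of $\vartheta$ produces the factor $e^{\pi i w^2/\tau}$, i.e.\ $e^{\theta^2/(2\beta)}$ in your variables (cf.\ Lemma \ref{boundgLemma}); you have dropped it, yet it is of size $e^{\pi\sqrt{6n}/2}$ at $\theta=\pi$ and is only a $1+o(1)$ perturbation for $|\theta|=o(\beta^{1/2})$. The claimed uniform asymptotic for $C(e^{i\theta};e^{-\beta})$, and consequently the claim $[q^n]C(e^{i\theta};q)\sim p(n)\,\frac{\pi\theta/\beta}{\sinh(\pi\theta/\beta)}$, are false outside a narrow window: for $\theta$ of order one the saddle point in $q$ moves, the exponential order of $[q^n]C(e^{i\theta};q)$ agrees with your expression only to first order in $\theta$, and the dominant $q$-singularity need not even be at $q=1$. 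These defects are harmless in the critical window $\theta\asymp\beta$, but your truncation and error estimates are built on the uniform claims, so the ranges must be split and the Gaussian factor expanded exactly as in Lemma \ref{MainGm1}. (2) More structurally, a circle-method remainder $\epsilon(\theta)$ at fixed $\theta$ integrates to an error of order $\epsilon_0\,\beta\,p(n)$, which is relative to $p(n)$ rather than to $\mathrm{sech}^2(\beta m/2)\,p(n)$; since $\mathrm{sech}^2(\beta m/2)$ decays to a negative power of $n$ at the top of the range $|m|\le\frac{1}{\pi\sqrt6}\sqrt n\log n$, you must either extract cancellation from $e^{-im\theta}$ inside the error term or prove a remainder bound uniform in $\theta$ and strictly sharper than the main term's $\theta$-profile--neither of which your outline supplies. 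The paper's ordering sidesteps this: evaluating the $w$-integral first (via the moments $\mathcal{E}_j$ and the Euler-polynomial generating function) places $\mathrm{sech}^2(\beta m/2)$ into the main term of $\mathcal{G}_{m,1}(z)$ before any circle-method error is incurred, so every subsequent error is automatically proportional to it. Your step (b) Fourier identity is correct and is the right mechanism for the main term, but as a proof the proposal has real gaps in exactly the place you identify as delicate, namely uniformity.
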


\begin{remarks*}\mbox{ }
\begin{enumerate}
\item[1.]
For fixed $m$ one can directly obtain asymptotic formulas since the generating function is the convolution of a modular form and a partial theta function \cite{BM}. However, Dyson's conjecture is a bivariate asymptotic. Indeed, this fact is the source of the difficulty of this problem.

\item[2.]
We note that Theorem  \ref{crank Theorem}  is of a very different nature than known asymptotics in the literature. For example, the partition function can be approximated as 
$$
p(n) = M(n) + O \left( n^{-\alpha}\right),
$$
where $M(n)$ is the main term which is  a sum of varying length of Kloosterman sums and Bessel functions and $\alpha>0$. Rademacher \cite{Rad} obtained $\alpha = \frac{3}{8}$, Lehmer improved this to $\frac12 - \varepsilon$ and Folsom and Masri \cite{FM} in their recent work obtained an impressive error of $n^{-\delta}$ for some absolute $\delta >\frac12$. Our result has a very different flavor due to the nonmodularity of the generating function and the bivariate asymptotics.

%
\item[3.]
In fact we could replace the error by $O( \beta^{\frac{1}{2}} m \alpha^2(m))$ for any $\alpha(m)$ such that $\frac{\log n}{n^{\frac{1}{4}}} = o \left(\alpha(m)\right)$ for all $|m|\leq \frac1{\pi\sqrt{6}}\sqrt{n}\log n$ and $\beta m \alpha(m) \rightarrow 0$ as $n \rightarrow \infty$. Here we chose $\alpha(m) = |m|^{-\frac{1}{3}}$ to avoid complicated expressions in the proof.

\end{enumerate}
\end{remarks*}
A straightforward calculation shows
\begin{corollary}\label{Almostall}
Almost all partitions satisfy Dyson's conjecture. To be more precise
\begin{equation}\label{crankas}
\sharp\left\{\lambda\vdash n|\text{\upshape crank}(\lambda)|\leq\frac{\sqrt{n}}{\pi\sqrt{6}}\log n\right\}\sim p(n).
\end{equation}
\end{corollary}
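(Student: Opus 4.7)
The plan is to apply Theorem~\ref{crank Theorem} to every $m$ in the stated range and sum the resulting asymptotics. Setting $M_0 := \frac{\sqrt{n}}{\pi\sqrt{6}}\log n$, by definition
$$
\sharp\{\lambda\vdash n\colon |\mathrm{crank}(\lambda)|\leq M_0\} = \sum_{|m|\leq M_0} M(m,n).
$$
Substituting \eqref{crankas1} and using the uniform bound
$$
\beta^{\frac{1}{2}}|m|^{\frac{1}{3}} \leq \beta^{\frac{1}{2}} M_0^{\frac{1}{3}} = O\left(n^{-\frac{1}{12}}(\log n)^{\frac{1}{3}}\right) = o(1),
$$
the error from Theorem~\ref{crank Theorem} factors out as $o(1)$ times the main sum. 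Hence it suffices to show
$$
\sum_{|m|\leq M_0} \frac{\beta}{4}\mathrm{sech}^2\left(\frac{\beta m}{2}\right) = 1 + o(1).
$$

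The next step is to compare this Riemann sum with its continuous analogue. The summand $f(m):=\frac{\beta}{4}\mathrm{sech}^2(\beta m/2)$ is smooth, even, unimodal, with $\max f = \beta/4$ and $|f'(m)|=O(\beta^2)$ uniformly in $m$. A direct trapezoidal or Euler--Maclaurin estimate therefore gives
$$
\sum_{|m|\leq M_0} f(m) = \int_{-M_0}^{M_0} f(m)\,dm + O\left(\beta\right).
$$
The substitution $u=\beta m/2$ turns the integral into $\int_{-\beta M_0/2}^{\beta M_0/2}\tfrac{1}{2}\mathrm{sech}^2(u)\,du = \tanh(\beta M_0/2)$. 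Since $\beta M_0/2 = \frac{\log n}{12}\to\infty$, this equals $1-O(n^{-\frac{1}{6}})$ using $\tanh(a)=1-2e^{-2a}+O(e^{-4a})$. Assembling the three error estimates (the multiplicative $o(1)$ from Theorem~\ref{crank Theorem}, the $O(\beta)$ Riemann-sum error, and the exponentially small tail of $\tanh$) yields \eqref{crankas}.

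No step here is the ``hard part'' in any serious sense: with Theorem~\ref{crank Theorem} in hand, everything is bookkeeping. The only things to watch are that the Theorem's error bound is uniform across the summation range (it is, by hypothesis), and that the cutoff $M_0$ is chosen precisely so that $\beta M_0 \to \infty$ while $\beta^{\frac{1}{2}} M_0^{\frac{1}{3}} \to 0$; both of these are built into the numerical constants.
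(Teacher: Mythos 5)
Your argument is correct and is exactly the ``straightforward calculation'' the paper alludes to without writing out: apply Theorem~\ref{crank Theorem} uniformly over $|m|\leq M_0$, replace the sum of $\frac{\beta}{4}\mathrm{sech}^2(\beta m/2)$ by the integral $\tanh(\beta M_0/2)=1-O(n^{-1/6})$, and collect the errors. All three error estimates check out numerically ($\beta^{1/2}M_0^{1/3}=O(n^{-1/12}(\log n)^{1/3})$, Riemann-sum error $O(\beta)$, and $\beta M_0/2=\frac{\log n}{12}$), so nothing further is needed.
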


\begin{remarks*}\mbox{ }
\begin{enumerate}
\item[1.]
We thank Karl Mahlburg for pointing out Corollary \ref{Almostall} to us.
\item[2.]
We can improve \eqref{crankas} and give the size of the error term.
\end{enumerate}
\end{remarks*}
Dyson's conjecture follows from a more general result concerning the coefficients $M_k (m,n)$ defined for $k\in\N$ by
$$
\mathcal{C}_k \left( \zeta ; q \right) = \sum_{n=0}^\infty \sum_{m=-\infty}^\infty M_k \left( m,n \right) \zeta^m q^n := \frac{(q)_\infty^{2-k}}{\left( \zeta q\right)_\infty \left (\zeta^{-1} q\right)_\infty}.
$$
Note that $M(m, n)=M_1(m, n)$. Denoting by $p_k(n)$ the number of partitions of $n$ allowing $k$ colors, we have.

\begin{theorem}\label{1.4}For $k$ fixed and $|m|\leq\frac{1}{6\beta_k}\log n$, we have as $n\rightarrow\infty$
\[M_k(m,n)=\frac{\beta_k}{4}\operatorname{sech}^2\left(\frac{\beta_k m}{2}\right)p_k(n)\left(1+O\left(\beta_k^{\frac12} |m|^{\frac13}\right)\right),\]

with $\beta_k:=\pi\sqrt{\frac{k}{6n}}$.
\end{theorem}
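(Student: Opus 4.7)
\emph{Proof plan for Theorem \ref{1.4}.}
The plan is to run a bivariate circle-method / saddle-point analysis of $\mathcal{C}_k(\zeta;q)$. First I would extract the coefficient of $\zeta^m q^n$ by a double contour integral: by orthogonality on $|\zeta|=1$ followed by Cauchy's theorem on $|q|=e^{-\beta_k}$, one obtains
\[
M_k(m,n) = \frac{e^{n\beta_k}}{(2\pi)^2}\int_{-\pi}^\pi\!\int_{-\pi}^\pi \mathcal{C}_k\!\left(e^{i\varphi};\, e^{-\beta_k+i\theta}\right) e^{-im\varphi-in\theta}\,d\varphi\,d\theta,
\]
where $\beta_k$ is chosen as the saddle radius for the partition-like factor $1/(q;q)_\infty^k$ relative to $q^n$.

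The next step is to factor
\[
\mathcal{C}_k(\zeta;q)=\frac{1}{(q;q)^k_\infty}\cdot\frac{(q;q)^2_\infty}{(\zeta q;q)_\infty(\zeta^{-1}q;q)_\infty}
\]
and apply modular transformations to each piece. The first factor generates $p_k(n)$ and its asymptotic follows from applying $\tau\mapsto -1/\tau$ to $\eta(\tau)$. The second factor is, up to an $\eta^3$-factor, the reciprocal of $\vartheta_1(z;\tau)$, so the modular transformation of $\vartheta_1$ gives, for $q=e^{-\beta_k+i\theta}$ with $|\theta|$ small and $\zeta=e^{i\varphi}$, an expansion of the shape
\[
\frac{(q;q)^2_\infty}{(e^{i\varphi}q;q)_\infty(e^{-i\varphi}q;q)_\infty} = \frac{\pi\varphi/\beta_k}{\sinh(\pi\varphi/\beta_k)}\bigl(1+\text{lower-order terms}\bigr),
\]
up to controlled oscillatory factors. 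Plugging this into the $\varphi$-integral and invoking the Fourier-transform identity
\[
\frac{1}{2\pi}\int_{-\infty}^\infty \frac{\pi\varphi/\beta_k}{\sinh(\pi\varphi/\beta_k)}\,e^{-im\varphi}\,d\varphi = \frac{\beta_k}{4}\operatorname{sech}^2\!\left(\frac{\beta_k m}{2}\right)
\]
produces exactly the desired sech$^2$ main term.

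The $\theta$-integral is then handled by a Hardy--Ramanujan-style major/minor arc decomposition: the major arc near $\theta=0$ contributes $p_k(n)$ (via saddle-point applied to $(q;q)_\infty^{-k}$) times the $\varphi$-integral above, while minor-arc contributions are controlled by the cuspidal decay of $(q;q)_\infty^{-k}$ away from $q=1$. The hypothesis $|m|\le \frac{1}{6\beta_k}\log n$ ensures that $\beta_k m$ stays $O(\log n)$, so the sech$^2$ factor does not decay too rapidly and the multiplicative error $O(\beta_k^{1/2}|m|^{1/3})$ comfortably absorbs the various accumulated error terms.

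The principal obstacle, and presumably the bulk of the paper's technical work, is establishing the modular asymptotic for the inverse theta factor \emph{uniformly in $\varphi\in[-\pi,\pi]$}, not merely for $\varphi$ close to $0$ or bounded away from it. One has to interpolate between these regimes with explicit error bounds sharp enough that, after integration against $e^{-im\varphi}$, the errors survive multiplication by $|m|$ across the entire range $|m|\le \frac{1}{6\beta_k}\log n$. This uniform asymptotic of inverse theta functions --- the topic flagged in the title --- is the key auxiliary result; once it is in hand, feeding it into the bivariate saddle-point framework above delivers Theorem~\ref{1.4}.
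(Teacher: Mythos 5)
Your architecture matches the paper's: extract the $\zeta^m$-coefficient by a Fourier integral in $w=\varphi/2\pi$, split off the modular factor $(q)_\infty^{-k}$, use the Jacobi transformation of $\vartheta$ to expand the inverse theta quotient near $q=1$, and run a Wright-style major/minor arc analysis in $q$. Your route to the $\operatorname{sech}^2$ via the Fourier pair $\frac{1}{2\pi}\int_{\R}\frac{\pi\varphi/\beta_k}{\sinh(\pi\varphi/\beta_k)}e^{-im\varphi}\,d\varphi=\frac{\beta_k}{4}\operatorname{sech}^2(\beta_k m/2)$ is essentially equivalent to the paper's detour through Euler polynomials (its Lemmas \ref{2.2} and \ref{EulerLemma} encode exactly this transform); that substitution is legitimate and arguably cleaner. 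However, as stated your expansion of the theta quotient is wrong in a way that matters: the transformation law produces $\frac{2\pi\sin(\pi w)}{z\sinh(2\pi^2w/z)}e^{2\pi^2w^2/z}$, and the Gaussian factor $e^{2\pi^2w^2/z}$ is not a ``lower-order term'' --- at $w=\tfrac12$ it is of size $e^{\pi^2/(2\beta_k)}$, exponentially large. The whole point of the paper's triple Taylor expansion in $j,\nu,r$ and the integrals $\mathcal{I}_\ell$, $\mathcal{E}_\ell$ is to resum the contributions of $\sin(\pi w)/\pi w$ and of this Gaussian factor and show they only perturb the $\operatorname{sech}^2$ main term by a relative $O(\beta_k m^{2/3})$. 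Your plan silently discards them.

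The second, more serious omission is that nothing in your plan produces the exponent $\tfrac13$ in the error $O(\beta_k^{1/2}|m|^{1/3})$, because you have not specified the width of the major arc. The paper takes $z=\beta_k(1+ixm^{-1/3})$ with major arc $|x|\le 1$, i.e., a width that shrinks with $m$. This choice is forced from two sides at once: on the major arc one must show $\operatorname{sech}^2(mz/2)=\operatorname{sech}^2(\beta_k m/2)(1+O(\beta_k m^{2/3}))$ --- the approximation of a decaying function of the \emph{complex} variable $mz$ by its value at $\beta_k m$, which fails if the arc is too wide --- while on the minor arc one needs the saving $\exp(-\frac{\sqrt{6kn}}{8\pi}m^{-2/3})$ from Lemma \ref{lem:far} with $M=m^{-1/3}$, which degrades if the arc is too narrow. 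Balancing these is where $|m|^{1/3}$ comes from, and it is also why the error stays \emph{multiplicative} against $\operatorname{sech}^2(\beta_k m/2)$ over the full range $|m|\le\frac{1}{6\beta_k}\log n$. Relatedly, your diagnosis of the principal difficulty is off: the transformation law for $\vartheta$ is exact and uniform in $\varphi$, so uniformity in $\varphi$ is not the obstacle; the obstacle is uniformity in $m$, i.e., keeping every error term small relative to a main term that itself decays like $e^{-\beta_k|m|}$.
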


\begin{remarks*}\mbox{ }
\begin{enumerate}

\item[1.]

We note that for $k\geq 3$, the functions $C_k(\zeta; q)$ are well-known to be generating functions of Betti 
numbers of moduli spaces of Hilbert schemes on $(k-3)-$point blow-ups of the projective plane 
\cite{Go} (see also \cite{BM} and references therein). The results of this paper immediately gives the 
limiting profile of the Betti numbers for large second Chern class of the sheaves. Recently,
Hausel and Rodriguez-Villegas  \cite{HRo} also determined  profiles of Betti numbers for other moduli spaces. 

\item[2.]
Note that our method of proof would allow determining further terms in the asymptotic expansion of $M_k(m, n)$.

\item[3.] Again we could replace the error by $O( \beta_k^{\frac{1}{2}} m \alpha_k^2(m) )$ for any $\alpha_k(m)$ such that $\frac{\log n}{(kn)^{\frac{1}{4}}} = o \left(\alpha_k(m)\right)$ for all $|m|\leq \frac1{6 \beta_k}\log n$ and $\beta_k m \alpha_k(m) \rightarrow 0$ as $n \rightarrow \infty$.
\item[4.]
The function $\mathcal{C}_k$ can also be represented as a so-called Lerch sum.  To be more precise, we have \cite{AG} 
\begin{equation}\label{CrankLerch}
\sum_{\substack{m\in\Z \\ n\geq 0}} M\left(m,n\right) \zeta^m q^n = \frac{1-\zeta}{(q)_\infty} \sum_{n\in\Z} \frac{(-1)^nq^{\frac{n(n+1)}2}}{1-\zeta q^n}.
\end{equation}
This representation, which was a key representation in \cite{BM}, is not used in this paper.
\item[5.]
The special case $k=2$ yields the birank of partitions \cite{HL}.
\item[6.] 
We expect that our methods also apply to show  an analogue of \eqref{crankas1} for the rank. 
The case of fixed $m$ is considered in upcoming work by Byungchan Kim, Eunmi Kim, and Jeehyeon Seo \cite{KKS}.
\end{enumerate}
\end{remarks*}

This paper is organized as follows.
In Section 2, we recall basic facts on modular and Jacobi forms which are the base components of $\mathcal{C}_k$ and collect properties on Euler polynomials.
In Section 3, we determine the asymptotic behavior of $\mathcal{C}_k$. 
In Section 4, we use Wright's version of the Circle Method to finish the proof of Theorem \ref{1.4}. In Section $5$, we illustrate Theorem \ref{crank Theorem} numerically.

\section*{Acknowledgements}
The research of the first author was supported by the Alfried Krupp Prize for Young University Teachers of the Krupp foundation and the research leading to these results has received funding from the European Research Council under the European Union's Seventh Framework Programme (FP/2007-2013) / ERC Grant agreement n. 335220 - AQSER. Most of this research was conducted while the second author was visiting the University of Cologne funded by the Krupp foundation. The first author thanks Freeman Dyson and Jan Manschot for enlightening conversation.
Moreover we thank Michael Mertens, Karl Mahlburg, and Larry Rolen for their comments on an earlier version of this paper. Moreover we thank the referee for valuable comments that improved the exposition of this paper.

\section{Preliminaries}
\subsection{Modularity of the generating functions}
A key ingredient of our asymptotic results is to employ the modularity of the functions $\mathcal{C}_k$. To be more precise, we write (throughout $q:=e^{2\pi i \tau}$, $\zeta := e^{2\pi i w}$ with $\tau\in \mathbb{H}, w\in\C$)
\begin{equation}\label{seeJacobi}
\mathcal{C}_k \left( \zeta ; q \right) = \frac{i\left( \zeta^\frac12 - \zeta^{-\frac12}\right) q^{\frac{k}{24}} \eta^{3-k} (\tau)}{\vartheta \left( w; \tau \right)},
\end{equation}
where 
$$
\eta (\tau) := q^{\frac{1}{24}} \prod_{n=1}^\infty \left( 1- q^n \right),
$$
$$
\vartheta \left( w; \tau \right) := i \zeta^\frac12 q^\frac18 \prod_{n=1}^\infty \left( 1- q^n\right) \left( 1 - \zeta q^n \right) \left( 1- \zeta^{-1}q^{n-1}\right).
$$
The function $\eta$ is a modular from, whereas $\vartheta$ is a Jacobi form. To be more precise, we have the following transformation laws (see e.g. \cite{Rad}).

\begin{lemma}\label{LemmaTrans}
We have
$$
\eta \left( - \frac{1}{\tau} \right) = \sqrt{-i\tau} \eta (\tau),
$$
$$
\vartheta \left( \frac{w}{\tau} ; - \frac{1}{\tau} \right) = - i \sqrt{-i \tau} e^{\frac{\pi  i w^2}{\tau}} \vartheta \left( w; \tau \right).
$$
\end{lemma}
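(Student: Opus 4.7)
The plan is to deduce both identities by the standard Poisson summation argument, using the product formulas given in the text to pass to theta series. Both are classical; I sketch one coherent route.

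First I would handle $\vartheta$. Apply the Jacobi triple product identity
$$\prod_{n=1}^\infty (1-q^n)(1-\zeta q^n)(1-\zeta^{-1}q^{n-1}) = \sum_{n\in\Z} (-1)^n \zeta^n q^{n(n+1)/2}$$
to the product defining $\vartheta$. Absorbing the prefactor $i\zeta^{1/2}q^{1/8}$ into the sum via $q^{n(n+1)/2}q^{1/8} = q^{(n+1/2)^2/2}$ and $\zeta^{n+1/2}=e^{2\pi i(n+1/2)w}$, one obtains a Gaussian-type sum over the shifted lattice $\Z+\tfrac12$. Applying the Poisson summation formula to this shifted Gaussian series (equivalently, using the functional equation $\widehat{f}(y)=\sqrt{-i/\tau}\,e^{-\pi i y^2/\tau}$ of $f(x)=e^{\pi i x^2\tau}$ and tracking the characters $e^{2\pi i x w}$) yields the dual series, which after reindexing is again of the same form in $(w/\tau,-1/\tau)$. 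Collecting the resulting prefactors gives the claimed identity with multiplier $-i\sqrt{-i\tau}\,e^{\pi i w^2/\tau}$.

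For $\eta$, I would derive the transformation as a consequence of the $\vartheta$-transformation using Jacobi's derivative formula $\partial_w \vartheta(w;\tau)\big|_{w=0} = 2\pi\,\eta(\tau)^3$. Differentiating both sides of the $\vartheta$-transformation in $w$ and then setting $w=0$ yields
$$\tfrac{1}{\tau}\cdot 2\pi\,\eta(-1/\tau)^3 = -i\sqrt{-i\tau}\cdot 2\pi\,\eta(\tau)^3,$$
hence $\eta(-1/\tau)^3 = (-i\tau)^{3/2}\eta(\tau)^3$. Taking cube roots leaves an ambiguity by a cube root of unity, which is pinned down by evaluating at the fixed point $\tau=i$ (where $\eta(i)>0$ and $\sqrt{-i\cdot i}=1$), giving the stated branch $\eta(-1/\tau) = \sqrt{-i\tau}\,\eta(\tau)$.

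The main obstacle, as always with these identities, is bookkeeping: getting the branches of $\sqrt{-i\tau}$ and the constant phases from Poisson summation correct. For this it is safest to verify the whole calculation in the regime $\tau=it$ with $t>0$, where everything is real and positive, and then invoke analytic continuation in $\tau\in\mathbb H$ to conclude. Since the resulting identities are those recorded in standard references such as \cite{Rad}, one may alternatively simply appeal to the classical proofs there.
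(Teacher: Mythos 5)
The paper does not actually prove this lemma: these are the classical transformation laws of the Dedekind $\eta$-function and the Jacobi theta function, and the authors simply record them with a pointer to Rademacher's book \cite{Rad}. Your route --- Jacobi triple product to rewrite $\vartheta$ as a Gaussian sum over the shifted lattice $\Z+\frac12$, Poisson summation for the $\vartheta$-law, then Jacobi's derivative formula plus evaluation at $\tau=i$ to pin down the branch for $\eta$ --- is the standard textbook proof and is correct in outline. Two small bookkeeping points with the paper's normalization: the product as written factors as $\vartheta(w;\tau)=-2\sin(\pi w)\,q^{1/8}\prod_{n\geq 1}(1-q^n)(1-\zeta q^n)(1-\zeta^{-1}q^n)$, so Jacobi's derivative formula here reads $\partial_w\vartheta(w;\tau)\vert_{w=0}=-2\pi\,\eta(\tau)^3$ rather than $+2\pi\,\eta(\tau)^3$; this constant cancels from both sides when you differentiate the transformation law, so your conclusion $\eta(-1/\tau)^3=(-i\tau)^{3/2}\eta(\tau)^3$ is unaffected. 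Also, when differentiating the right-hand side you should note that the term coming from $\partial_w e^{\pi i w^2/\tau}$ vanishes at $w=0$, so only the $\vartheta'(0;\tau)$ term survives. With the cube-root ambiguity resolved by continuity of the ratio on the connected domain $\mathbb{H}$ together with the evaluation at $\tau=i$, your argument yields exactly the stated identities; alternatively, the bare citation to \cite{Rad} that the paper uses is all that is required at this point.
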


\subsection{Euler polynomials}
Recall that the Euler polynomials may be defined by their generating function
\begin{equation}\label{Eulergen}
\frac{2e^{xt}}{e^t +1}=: \sum_{r=0}^\infty E_r (x) \frac{t^r}{r!}.
\end{equation}
The following lemma may easily be concluded by differentiating the generating function \eqref{Eulergen}. For the readers convenience we give a proof.
\begin{lemma}\label{2.2}
We have
$$
-\frac12 \text{\rm sech}^2 \left( \frac{t}2\right) = \sum_{r=0}^\infty E_{2r+1} (0) \frac{t^{2r}}{(2r)!}.
$$
\end{lemma}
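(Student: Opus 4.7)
The plan is to obtain the identity by differentiating the generating function \eqref{Eulergen} after specializing $x=0$, and then recognizing $\operatorname{sech}^2(t/2)$ via a direct manipulation of $\frac{2}{e^t+1}$.

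First I would set $x=0$ in \eqref{Eulergen} to get
$$\frac{2}{e^t+1} = \sum_{r=0}^\infty E_r(0)\frac{t^r}{r!}.$$
Next, the key analytic observation is that
$$\frac{d}{dt}\left(\frac{2}{e^t+1}\right) = -\frac{2e^t}{(e^t+1)^2} = -\frac{2}{(e^{t/2}+e^{-t/2})^2} = -\tfrac{1}{2}\operatorname{sech}^2\!\left(\tfrac{t}{2}\right),$$
which is exactly the left-hand side of the claimed identity. Differentiating the power series term by term gives
$$-\tfrac{1}{2}\operatorname{sech}^2\!\left(\tfrac{t}{2}\right) = \sum_{r=0}^\infty E_{r+1}(0)\frac{t^r}{r!}.$$

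It then remains to show that only the even-indexed powers of $t$ survive, i.e.\ that $E_{2k}(0)=0$ for $k\ge 1$, so that the odd $r$ terms in the above sum vanish. This follows by noting that
$$\frac{2}{e^t+1}-1 = \frac{1-e^t}{1+e^t} = -\tanh\!\left(\tfrac{t}{2}\right)$$
is an odd function of $t$, so its Taylor expansion contains only odd powers; hence the coefficients $E_r(0)/r!$ vanish for all even $r\ge 2$. Substituting $r=2k$ into the differentiated series and using this vanishing yields
$$-\tfrac{1}{2}\operatorname{sech}^2\!\left(\tfrac{t}{2}\right) = \sum_{k=0}^\infty E_{2k+1}(0)\frac{t^{2k}}{(2k)!},$$
as claimed. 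There is no real obstacle here; the only subtle point is spotting the clean identification $-\frac{1}{2}\operatorname{sech}^2(t/2) = \frac{d}{dt}\frac{2}{e^t+1}$, after which the result is immediate.
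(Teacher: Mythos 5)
Your proof is correct and follows essentially the same route as the paper: differentiate the generating function \eqref{Eulergen} at $x=0$ and identify the derivative of $\tfrac{2}{e^t+1}$ with $-\tfrac12\operatorname{sech}^2(t/2)$. The only cosmetic difference is in handling the parity: the paper antisymmetrizes first, forming $\tfrac{1}{e^t+1}-\tfrac{1}{e^{-t}+1}$ to isolate the odd-index terms before differentiating, whereas you differentiate first and then invoke the vanishing $E_{2k}(0)=0$ for $k\ge1$ via the oddness of $-\tanh(t/2)$ --- which is the same underlying fact.
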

\begin{proof}
We have
$$
\sum_{r=0}^\infty E_{2r+1} (0) \frac{t^{2r}}{(2r)!} = \frac{\text{d}}{\text{d}t} \sum_{r=0}^\infty E_{2r+1} (0) \frac{t^{2r+1}}{(2r+1)!}.
$$
Now 
$$
\frac{2}{e^t+1} = \sum_{r=0}^\infty E_r (0) \frac{t^r}{r!},
$$
$$
\frac{2}{e^{-t}+1} = \sum_{r=0}^\infty E_r (0) \frac{(-t)^r}{r!}.
$$
Taking the difference gives the claim of the lemma since
$$
\frac{\text{d}}{\text{d}t} \left( \frac{1}{e^t+1} - \frac{1}{e^{-t}+1}\right) = - \frac12 \text{sech}^2 \left( \frac{t}2\right).
$$
\end{proof}
We also require an integral representation of Euler polynomials. To be more precise, setting for $j\in\N_0$
\begin{equation}\label{Eulerintegral}
\mathcal{E}_j := \int_0^\infty \frac{w^{2j+1}}{\sinh (\pi w)} dw,
\end{equation}
we obtain
\begin{lemma}\label{EulerLemma}
We have
$$
\mathcal{E}_j = \frac{(-1)^{j+1} E_{2j+1} (0)}{2}.
$$
\end{lemma}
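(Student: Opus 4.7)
My plan is to evaluate $\mathcal{E}_j$ by expanding $1/\sinh(\pi w)$ as a geometric series of exponentials and then integrating term by term. For $w > 0$ one has
$$\frac{1}{\sinh(\pi w)} = \frac{2e^{-\pi w}}{1-e^{-2\pi w}} = 2\sum_{n=0}^{\infty} e^{-(2n+1)\pi w},$$
with absolutely convergent, nonnegative summands. Applying Tonelli and using $\int_0^\infty w^{2j+1} e^{-cw}\,dw = (2j+1)!/c^{2j+2}$ would yield
$$\mathcal{E}_j = \frac{2(2j+1)!}{\pi^{2j+2}}\sum_{n=0}^{\infty}\frac{1}{(2n+1)^{2j+2}} = \frac{2(2j+1)!\,(1-2^{-2j-2})}{\pi^{2j+2}}\,\zeta(2j+2).$$

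It then remains to match this with $(-1)^{j+1}E_{2j+1}(0)/2$. For this I would invoke Euler's formula
$$\zeta(2k) = \frac{(-1)^{k+1}(2\pi)^{2k}B_{2k}}{2(2k)!}$$
at $k=j+1$ (which cancels the factor $\pi^{2j+2}$) and the identity
$$E_{2j+1}(0) = -\frac{2(2^{2j+2}-1)}{2j+2}\,B_{2j+2},$$
readily obtained by comparing the generating function \eqref{Eulergen} at $x=0$ with the Bernoulli generating function $t/(e^t-1)$ via the decomposition $2/(e^t+1) = 2/(e^t-1) - 4/(e^{2t}-1)$. Substituting both identities into the expression for $\mathcal{E}_j$ and performing the elementary factorial and power-of-two bookkeeping collapses everything to the claimed equality.

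The argument has no real technical obstacle beyond sign tracking. A slicker alternative route, avoiding explicit appeal to $\zeta$-values, is to compute the generating series
$$\sum_{j\ge 0}\mathcal{E}_j\frac{t^{2j+1}}{(2j+1)!} = \int_0^\infty\frac{\sinh(tw)}{\sinh(\pi w)}\,dw = \frac12\tan\!\left(\frac{t}{2}\right) \qquad (|t|<\pi)$$
and match it with the Taylor expansion of $\tfrac12\tan(t/2)$ obtained by integrating Lemma \ref{2.2} (which produces the expansion of $-\tanh(t/2)$) and substituting $t\mapsto it$; equating Taylor coefficients in $t$ then gives the claim.
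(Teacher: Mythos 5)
Your argument is correct, and it follows a genuinely different route from the paper. The paper proves Lemma \ref{EulerLemma} by shifting the contour via $w\mapsto w+\tfrac{i}{2}$ (justified by the Residue Theorem), which converts the integrand into $\left(w+\tfrac{i}{2}\right)^{2j+1}/\cosh(\pi w)$; it then applies the Binomial Theorem, quotes the tabulated integrals $\int_\R w^\ell \operatorname{sech}(\pi w)\,dw=(-2i)^{-\ell}E_\ell$ from Erd\'elyi et al., and finishes with the identity expressing $E_j(x)$ in terms of the Euler numbers $E_\ell$. You instead expand $1/\sinh(\pi w)$ as $2\sum_{n\ge 0}e^{-(2n+1)\pi w}$, integrate term by term (Tonelli applies since everything is nonnegative), and land on $\zeta(2j+2)$ times an odd-denominator correction; Euler's formula for $\zeta(2k)$ and the classical identity $E_{2j+1}(0)=-\tfrac{2(2^{2j+2}-1)}{2j+2}B_{2j+2}$ then collapse the expression to the claim — I checked the bookkeeping, including the sign, and it works (e.g.\ $j=0$ gives $\mathcal{E}_0=\tfrac14=\tfrac{-E_1(0)}{2}$). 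What each approach buys: yours is more self-contained and avoids contour deformation, at the cost of routing through Bernoulli numbers and special values of $\zeta$; the paper's stays entirely within the Euler-number formalism by citing two facts from \cite{EMOT}, which keeps the Bernoulli numbers out of the picture. Your second, generating-function variant is arguably the most natural of all, since it matches $\sum_j \mathcal{E}_j t^{2j+1}/(2j+1)!=\tfrac12\tan(t/2)$ directly against the (integrated, rotated) statement of Lemma \ref{2.2}, which is exactly the lemma this result is paired with in the proof of Lemma \ref{MainGm1}; its only external input is the evaluation $\int_0^\infty \sinh(tw)/\sinh(\pi w)\,dw=\tfrac12\tan(t/2)$, which in turn follows from the same geometric-series expansion together with the partial-fraction expansion of the tangent.
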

\begin{proof}
We make the change of variables $w\to w+\frac{i}{2}$ and then use the Residue Theorem to shift the path of integration back to the real line.
Using the Binomial Theorem, we may thus write
$$
\mathcal{E}_j = -\frac{i}{2} \int_\R \frac{\left(w +\frac{i}2\right)^{2j+1}}{\cosh (\pi w)} dw = -\frac{i}{2} \sum_{\ell=0}^{2j+1}  {2j+1 \choose \ell} \left( \frac{i}2\right)^{2j+1-\ell} \int_\R \frac{w^\ell}{\cosh (\pi w)} dw.
$$
The last integral is known to equal $(-2i)^{-\ell} E_\ell$, where $E_\ell := 2^\ell E_\ell (\frac12)$ denotes the $\ell$th Euler number (see page $41$ of \cite{EMOT}). The claim now follows using the well-known identity
(see page $41$ of \cite{EMOT})
$$
E_j (x) = \sum_{\ell=0}^j {j \choose \ell} \left( x - \frac12 \right)^{j-\ell} \frac{E_\ell}{2^\ell}.
$$
\end{proof}

\section{Asymptotic behavior of the function $\mathcal{C}_{k}$.}
Since $M_k(-m, n)=M_k (m, n)$ we from now on assume that $m\geq 0$.
The goal of this section is to study the asymptotic behavior of the generating function of $M_k(m,n)$.
We define
\begin{align*}
\mathcal{C}_{m,k} (q) &:= \sum_{n=0}^\infty M_k \left( m,n\right) q^n = \int_{-\frac12}^{\frac12} \mathcal{C}_k \left( e^{2\pi i w} ; q \right) e^{-2\pi i mw} dw \\ &= 2 \frac{q^{\frac{k}{24}}}{\eta^k (\tau)} \int_0^{\frac12} g \left( w; \tau \right) \cos (2\pi m w) dw,
\end{align*}
where 
$$
g \left( w; \tau \right) := \frac{i \left( \zeta^{\frac12} - \zeta^{-\frac12} \right) \eta^3 (\tau)}{\vartheta \left( w; \tau \right)}.
$$
Here we used that $g(-w; \tau) = g(w; \tau)$. In this section we determine the asymptotic behavior of $\mathcal{C}_{m,k}(q)$, when $q$ is near an essential singularity on the unit circle. It turns out that the dominant pole lies at $q=1$. Throughout the rest of the paper let $\tau=\frac{iz}{2\pi}, z=\beta_k(1+ixm^{-\frac13})$ with $x\in\R$ satisfying $|x|\leq\frac{\pi m^{\frac13}}{\beta_k}$.
\subsection{Bounds near the dominant pole}
In this section we consider the range $|x|\leq1$. 
We start by determining the asymptotic main term of $g$. Lemma \ref{LemmaTrans} and the definition of $\vartheta$ and $\eta$ immediately imply.
\begin{lemma}\label{boundgLemma}
For $0\leq w \leq 1$ we have for $|x|\leq1$ as $n\rightarrow\infty$
$$
g \left( w; \frac{iz}{2\pi}\right) = \frac{2\pi \sin (\pi w)}{z \sinh \left( \frac{2\pi^2w}{z} \right)}e^{\frac{2\pi^2w^2}{z}} \left( 1 + O \left( e^{-4\pi^2 (1-w) \mathrm{Re}\left(\frac{1}{z}\right) }\right)\right).
$$
\end{lemma}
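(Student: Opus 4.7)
My plan is to derive the claimed expression by applying the modular transformations from Lemma \ref{LemmaTrans} to both $\eta^3(\tau)$ and $\vartheta(w;\tau)$, and then extracting the dominant factors from the product expansions. Write $\tau = \frac{iz}{2\pi}$, so that $-\frac{1}{\tau} = \frac{2\pi i}{z}$, $\frac{w}{\tau} = -\frac{2\pi i w}{z}$, $-i\tau = \frac{z}{2\pi}$, and $\frac{\pi i w^2}{\tau} = \frac{2\pi^2 w^2}{z}$. Lemma \ref{LemmaTrans} then yields
$$
\eta^3(\tau) = \left(\frac{2\pi}{z}\right)^{3/2}\eta^3\!\left(\tfrac{2\pi i}{z}\right), \qquad \vartheta(w;\tau) = i\sqrt{\tfrac{2\pi}{z}}\,e^{-2\pi^2 w^2/z}\,\vartheta\!\left(-\tfrac{2\pi i w}{z};\tfrac{2\pi i}{z}\right).
$$
Combined with $\zeta^{1/2}-\zeta^{-1/2} = 2i\sin(\pi w)$, substitution into the definition of $g$ cancels the $\sqrt{2\pi/z}$ factors and leaves
$$
g\!\left(w;\tfrac{iz}{2\pi}\right) = \frac{2\pi\sin(\pi w)}{z}\,e^{2\pi^2 w^2/z}\cdot \frac{\eta^3(2\pi i/z)}{\vartheta(-2\pi iw/z;\,2\pi i/z)}.
$$

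Next I would expand the transformed $\eta$ and $\vartheta$ as infinite products. Set $\tilde q := e^{-4\pi^2/z}$ and $\tilde\zeta := e^{4\pi^2 w/z}$, so $\tilde\zeta^{1/2} = e^{2\pi^2 w/z}$ and $\tilde\zeta^{-1}=e^{-4\pi^2 w/z}$. In the $\vartheta$-product the single factor $(1-\tilde\zeta^{-1}\tilde q^{n-1})$ with $n=1$ must be peeled off because it is not exponentially small; using
$$
i\,\tilde\zeta^{1/2}(1-\tilde\zeta^{-1}) = 2i\sinh\!\left(\tfrac{2\pi^2 w}{z}\right),
$$
the leading factors of $\tilde q^{1/8}$ in numerator and denominator cancel, and one obtains
$$
g\!\left(w;\tfrac{iz}{2\pi}\right) = \frac{2\pi\sin(\pi w)}{z\sinh\!\left(\frac{2\pi^2 w}{z}\right)}\,e^{2\pi^2 w^2/z}\cdot \mathcal{P}(w,z),
$$
where
$$
\mathcal{P}(w,z) = \frac{\prod_{n\geq 1}(1-\tilde q^n)^2}{\prod_{n\geq 1}(1-\tilde\zeta\tilde q^n)\prod_{n\geq 2}(1-\tilde\zeta^{-1}\tilde q^{n-1})}.
$$

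It remains to show that $\mathcal{P}(w,z) = 1 + O\!\left(e^{-4\pi^2(1-w)\mathrm{Re}(1/z)}\right)$ uniformly for $0\leq w\leq 1$ and $|x|\leq 1$. For the latter range one has $\mathrm{Re}(1/z) \asymp 1/\beta_k \to \infty$, so $|\tilde q|$ is very small. I would bound each factor by estimating the absolute values: $|\tilde q^n| = e^{-4\pi^2 n\mathrm{Re}(1/z)}$, $|\tilde\zeta\tilde q^n| = e^{-4\pi^2(n-w)\mathrm{Re}(1/z)}$ for $n\geq 1$, and $|\tilde\zeta^{-1}\tilde q^{n-1}| = e^{-4\pi^2(w+n-1)\mathrm{Re}(1/z)}$ for $n\geq 2$. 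In each case, the largest term is bounded by $e^{-4\pi^2(1-w)\mathrm{Re}(1/z)}$ (using $w\geq 0$ so that $w+1\geq 1-w$), and the remaining terms form a geometric tail of smaller size. Therefore $\log \mathcal{P}(w,z)$ is dominated by a series whose sum is $O\!\left(e^{-4\pi^2(1-w)\mathrm{Re}(1/z)}\right)$, which gives the desired error.

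The main obstacle is not conceptual but book-keeping: one must keep the branches of $\sqrt{-i\tau}$ and $\tilde\zeta^{1/2}$ consistent and correctly identify which factor of the Jacobi-product blows up as $w\to 1$; once the $n=1$ factor is isolated into the $\sinh$, the remaining products are automatically geometrically close to $1$ in the range considered.
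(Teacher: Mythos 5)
Your approach is exactly the one the paper intends (its ``proof'' is the single sentence that the lemma follows from Lemma \ref{LemmaTrans} and the product expansions), and your computation is correct: the transformed quotient reduces to the stated main term times the product $\mathcal{P}(w,z)$, and for $|x|\leq 1$ one has $\mathrm{Re}(1/z)\geq \tfrac{1}{2\beta_k}\to\infty$, so the factors are controlled as you say. Two small points. First, your intermediate display for $g$ before peeling off the $n=1$ factor is missing a factor $2i$ (the numerator contributes $i\cdot 2i\sin(\pi w)$ and the denominator an $i$ from the $\vartheta$-transformation); it cancels against the $2i\sinh(2\pi^2w/z)$ in the next step, so your final formula is right, but as written the two displays are inconsistent. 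Second, and more substantively, your claim that $\mathcal{P}(w,z)=1+O(e^{-4\pi^2(1-w)\mathrm{Re}(1/z)})$ \emph{uniformly on} $0\leq w\leq 1$ does not hold near $w=1$: there $|\tilde\zeta\tilde q|=e^{-4\pi^2(1-w)\mathrm{Re}(1/z)}\to 1$, the factor $(1-\tilde\zeta\tilde q)^{-1}$ blows up, and $\log(1-u)=-u+O(u^2)$ is no longer available. Indeed the lemma as stated is false at $w=1$, where the claimed main term vanishes while $g(1;\tau)=1$ (the zero of $\sin(\pi w)$ cancels the zero of $\vartheta$). This is a defect of the paper's statement rather than of your argument: the lemma is only ever applied for $0\leq w\leq\tfrac12$ (in $\mathcal{G}_{m,1}$ and $\mathcal{G}_{m,2}$), where $|\tilde\zeta\tilde q|\leq e^{-2\pi^2\mathrm{Re}(1/z)}\to 0$ and your estimate is fully justified. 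You should either restrict your uniformity claim to $w$ bounded away from $1$ or note explicitly that the multiplicative error is only meaningful there.
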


In view of Lemma \ref{boundgLemma} it is therefore natural to define
\begin{align*}
\mathcal{G}_{m,1} (z) &:= \frac{4\pi}{z} \int_0^{\frac12} \frac{\sin (\pi w)}{\sinh \left( \frac{2\pi^2 w}{z}\right)} e^{\frac{2\pi^2 w^2}{z}} \cos (2\pi m w) dw,\\
\mathcal{G}_{m,2} (z) &:= 2 \int_0^{\frac12} \left( g \left( w; \frac{iz}{2\pi} \right) - \frac{2\pi \sin (\pi w)}{z\sinh \left( \frac{2\pi^2 w}{z}\right)} e^{\frac{2\pi^2 w^2}{z}} \right) \cos (2\pi m w) dw.
\end{align*}
Thus
\begin{equation}\label{Cmk}
\mathcal{C}_{m,k} \left( q \right) = \frac{q^{\frac{k}{24}}}{\eta^k (\tau)} \left( \mathcal{G}_{m,1} (z) + \mathcal{G}_{m,2} (z) \right).
\end{equation}
The dominant contribution comes from $\mathcal{G}_{m, 1}$.
\begin{lemma}\label{MainGm1}
Assume that $|x|\leq 1$ and $m\leq\frac{1}{6\beta_k}\log n$. Then we have as $n\rightarrow \infty$
$$
\mathcal{G}_{m,1} (z) = \frac z4\operatorname{sech}^2\left(\frac{\beta_km}{2}\right)+O\left( \beta_k^2m^{\frac23}\operatorname{sech}^2\left(\frac{\beta_k m}{2}\right)\right).
$$
\end{lemma}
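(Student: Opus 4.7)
The plan is a Laplace-type asymptotic analysis that exploits the concentration of $1/\sinh(2\pi^2 w/z)$ near $w=0$. First I substitute $u = 2\pi w/z$ to obtain
\[
\mathcal{G}_{m,1}(z) = 2 \int_{0}^{\pi/z} \frac{\sin(uz/2)\, e^{u^2 z/2}\, \cos(muz)}{\sinh(\pi u)}\, du,
\]
integrated along the straight segment from $0$ to $\pi/z$ in $\mathbb{C}$, and split
\[
\sin(uz/2)\, e^{u^2 z/2} \;=\; \frac{uz}{2} \;+\; R(u,z),
\]
where the Taylor expansion in $u$ gives $R(u,z) = \sum_{j \geq 1} a_j(z)\, u^{2j+1}$ with $|a_j(z)| \leq C^j|z|^{j+1}/j!$, so that $|R(u,z)| \leq C u^3 |z|^2$ on any set where $u^2|z| = O(1)$.

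For the leading piece $uz/2$, the integrand is holomorphic in the narrow sector between $[0,\pi/z]$ and the positive real axis (of opening $|\arg(\pi/z)| = \arctan(xm^{-1/3}) = o(1)$, containing no poles of $1/\sinh(\pi u)$), so Cauchy's theorem deforms the contour to $[0,|\pi/z|] \subset \mathbb{R}$. I would then extend to $[0,\infty)$ with super-polynomially small error, since $|\mathrm{Im}(mz)|=O(\beta_k m^{2/3})=o(1)$ implies $u|\cos(muz)|/\sinh(\pi u) = O(u\,e^{-(\pi-o(1))u})$. Taylor-expanding $\cos(au)$ and matching coefficients using Lemmas \ref{2.2} and \ref{EulerLemma} yields
\[
\int_0^\infty \frac{u\cos(au)}{\sinh(\pi u)}\, du \;=\; \frac{1}{4}\operatorname{sech}^2(a/2),
\]
valid for $|\mathrm{Im}(a)|<\pi$ by analytic continuation. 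Applied with $a=mz$, the leading piece produces the preliminary main term $\tfrac{z}{4}\operatorname{sech}^2(mz/2)$.

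For the remainder $\int \tfrac{R(u,z)\cos(muz)}{\sinh(\pi u)}\, du$ I would split the interval at $L := |z|^{-1/2}$. On $[0,L]$, where $u^2 |z|\leq 1$, the Taylor bound $|R|\leq Cu^3|z|^2$ applies, and extending to $[0,\infty)$ together with differentiating the identity above twice (which gives $\int_0^\infty u^3 \cos(au)/\sinh(\pi u)\,du = -\tfrac14 (d^2/da^2)\operatorname{sech}^2(a/2) = O(\operatorname{sech}^2(a/2))$) produces a contribution of $O(|z|^2 \operatorname{sech}^2(\beta_k m/2)) = O(\beta_k^2 \operatorname{sech}^2(\beta_k m/2))$. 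On $[L,|\pi/z|]$ the Taylor bound fails because $e^{u^2 z/2}$ grows; here I would use the crude bound $|R| \leq \pi/2 + e^{u^2\beta_k/2} \cosh(u|\mathrm{Im}(z)|/2)$ and note that the combined exponent $u^2\beta_k/2 - \pi u + u\beta_k m^{2/3}$ attains its maximum on this interval at $u=L$ with value $\approx -\pi/\sqrt{\beta_k}$, so this piece is super-polynomially small.

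Finally, since $\tfrac{mz}{2} - \tfrac{\beta_k m}{2} = \tfrac{i\beta_k x m^{2/3}}{2}$ has absolute value $O(\beta_k m^{2/3}) = o(1)$ and derivatives of $\operatorname{sech}^2$ are controlled by $\operatorname{sech}^2$ up to constants in a bounded horizontal strip, a one-step Taylor expansion gives
\[
\operatorname{sech}^2(mz/2) \;=\; \operatorname{sech}^2(\beta_k m/2) \;+\; O\bigl(\beta_k m^{2/3}\,\operatorname{sech}^2(\beta_k m/2)\bigr);
\]
multiplying by $z/4 = O(\beta_k)$ converts this into the claimed absolute error $O(\beta_k^2 m^{2/3}\operatorname{sech}^2(\beta_k m/2))$, which absorbs the $O(\beta_k^2 \operatorname{sech}^2)$ from the remainder (using $m\geq 1$). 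The principal obstacle I anticipate is handling the remainder on the bad interval $[L,|\pi/z|]$ uniformly in $m$: when $m$ is near the upper bound $(\log n)/(6\beta_k)$, the factor $\cos(muz)$ contributes an exponent $u\beta_k m^{2/3}$ that is not small compared to $\pi u$, so one must verify carefully that $-\pi u$ still dominates $u^2\beta_k/2 + u\beta_k m^{2/3}$ throughout this range.
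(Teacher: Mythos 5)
Your proposal runs on the same underlying engine as the paper's proof --- Taylor expansion of the numerator against the kernel $1/\sinh$, evaluation of the moments $\int_0^\infty u^{2\ell+1}/\sinh(\pi u)\,du$ via Euler numbers (your identity $\int_0^\infty u\cos(au)/\sinh(\pi u)\,du=\tfrac14\operatorname{sech}^2(a/2)$ is exactly the generating-function packaging of Lemmas \ref{2.2} and \ref{EulerLemma}), and the final conversion of $\operatorname{sech}^2(mz/2)$ into $\operatorname{sech}^2(\beta_km/2)$ --- but you organize it differently. The paper expands $\sin$, $\exp$, and $\cos$ simultaneously into a triple sum, integrates term by term, and resums; the terms with $j+r\ge1$ are only bounded by $O(|z|^2\cosh(mz))$, so the paper must finish with a case distinction on $\beta_k m$ that uses the full strength of the hypothesis $m\le\frac{1}{6\beta_k}\log n$ to absorb $\cosh(\beta_km)$ into $m^{\frac23}\operatorname{sech}^2(\beta_km/2)$. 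You instead isolate the linear term $uz/2$, evaluate it in closed form, and bound the remainder $R$ directly; if your remainder bound $O(|z|^2\operatorname{sech}^2(\beta_km/2))$ holds, the endgame is simpler and the hypothesis on $m$ is needed only to make $\beta_km^{\frac23}=o(1)$.

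The one step you must tighten is that remainder bound on $[0,L]$: as written you invoke the absolute estimate $|R|\le Cu^3|z|^2$ and simultaneously the oscillatory identity $\int_0^\infty u^3\cos(au)/\sinh(\pi u)\,du=O(\operatorname{sech}^2(a/2))$, but these are incompatible --- once you pass to $|R|$ you are integrating $u^3|\cos(mzu)|/\sinh(\pi u)$, which is only $O(1)$, and the $\operatorname{sech}^2$ factor is lost. Two repairs work: (a) expand one more term, $R(u,z)=\tfrac{u^3z^2}{4}+O\bigl(u^3|z|^3+u^5|z|^3\bigr)$ on $u^2|z|\le1$, apply the oscillatory identity only to the explicit $u^3z^2/4$ piece and bound the rest absolutely by $O(|z|^3)$, which is admissible since $m^{\frac23}\operatorname{sech}^2(\beta_km/2)\gg\beta_k$; or (b) accept the cruder bound $O(|z|^2)$ for the whole remainder and verify, as the paper effectively does in its closing case distinction, that $m^{\frac23}\operatorname{sech}^2(\beta_km/2)$ stays bounded below throughout $1\le m\le\frac{1}{6\beta_k}\log n$ (this is where the constant $\tfrac16$ is used: for $\beta_km$ large one has $m\gg\sqrt{n}$, hence $\tfrac23\log m>\tfrac16\log n\ge\beta_km$). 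With either repair the argument goes through; your treatment of the bad interval $[L,|\pi/z|]$ and of the contour deformation is sound, since $u^2\beta_k/2-\pi u\le-\pi u/2$ for $u\le\pi/\beta_k$ and $\beta_km^{\frac23}=o(1)$.
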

\begin{proof}
Inserting the Taylor expansion of $\sin$, $\exp$, and $\cos$, we get
$$
\sin (\pi w) e^{\frac{2\pi^2 w^2}{z}} \cos (2\pi mw) = \sum_{j,\nu,r\geq 0} \frac{(-1)^{j+\nu}}{(2j+1)! (2\nu)! r!} \pi^{2j+1} (2 \pi m )^{2\nu} \left( \frac{2\pi^2}{z}\right)^r w^{2j+2\nu+2r+1}.
$$
This yields that
$$
\mathcal{G}_{m,1} (z) = \frac{4\pi}{z} \sum_{j,\nu,r\geq 0} \frac{(-1)^{j+\nu}}{(2j+1)! (2\nu)! r!} \pi^{2j+1} (2\pi m)^{2\nu} \left( \frac{2\pi^2}{z}\right)^r \mathcal{I}_{j+\nu+r}
$$
where for $\ell\in\N_0$ we define
$$
\mathcal{I}_\ell := \int_0^{\frac12} \frac{w^{2\ell+1}}{\sinh \left( \frac{2\pi^2w}{z} \right)} dw.
$$
We next relate $\mathcal{I}_\ell$ to $\mathcal{E}_\ell$ defined in (\ref{Eulerintegral}). For this, we note that
\begin{equation}\label{splitI}
\mathcal{I}_\ell = \int_0^{\infty} \frac{w^{2\ell+1}}{\sinh \left( \frac{2\pi^2w}{z} \right)} dw - \mathcal{I}_\ell'
\end{equation}
with 
\begin{multline*}
\mathcal{I}_\ell' := \int_{\frac12}^\infty \frac{w^{2\ell+1}}{\sinh \left( \frac{2\pi^2w}{z} \right)} dw \ll \int_{\frac12}^\infty w^{2\ell+1} e^{-2\pi^2 w \text{Re}\left(\frac1z\right)} dw \\
 \ll \left(\text{Re}\left(\frac{1}{z}\right)\right)^{-2\ell-2} \Gamma \left( 2\ell+2; \pi^2 \text{Re}\left(\frac{1}{z}\right) \right).
\end{multline*}
Here $\Gamma (\alpha ; x) := \int_x^\infty e^{-w}w^{\alpha-1} dw$ denotes the incomplete gamma function and throughout $g(x)\ll f(x)$ means that $g(x)=O\left(f(x)\right)$. Using that as $x\rightarrow \infty$
\begin{equation}\label{incompletebound}
\Gamma \left( \ell; x\right) \sim x^{\ell -1} e^{-x}
\end{equation}
thus yields that
$$
\mathcal{I}_\ell' \ll\left(\text{Re}\left(\frac1{z}\right)\right)^{-1} e^{-\pi^2 \re{\frac{1}{z}}}\leq e^{-\pi^2\text{Re}\left(\frac1{z}\right)}.
$$
In the first summand in (\ref{splitI}) we make the change of variables $w\rightarrow \frac{zw}{2\pi}$ and then shift the path of integration back to the real line by the Residue Theorem. Thus we obtain that
$$
\int_0^\infty \frac{w^{2\ell+1}}{\sinh \left( \frac{2\pi^2w}{z}\right)} dw = \left( \frac{z}{2\pi}\right)^{2\ell+2} \mathcal{E}_\ell  = \left( \frac{z}{2\pi} \right)^{2\ell+2} \frac{(-1)^{\ell+1} E_{2\ell+1} (0)}{2},
$$
where for the last equality we used Lemma \ref{EulerLemma}. Thus 
\begin{multline*}
\mathcal{G}_{m,1} (z) =  \sum_{j,\nu,r\geq 0} \frac{(-1)^{r+1}}{2^{2j+r+1} (2j+1)! (2\nu)! r!} m^{2\nu} z^{2j+2\nu+r+1} \\\times \left( E_{2j+2\nu+2r+1} (0) + O \left( |z|^{-2j-2\nu -2r-2} e^{-\pi^2 \re{\frac{1}{z}}}\right)\right) \\
= \sum_{\nu= 0}^\infty \frac{(mz)^{2\nu}}{(2\nu)!} \left( - \frac{z}{2} E_{2\nu+1} (0) + O \left( |z|^2 \right) \right) = \frac{z}{4} \text{sech}^2 \left( \frac{mz}{2} \right) + O \left( |z|^2 \cosh (mz)\right),
\end{multline*}
where for the last equality we used Lemma \ref{2.2}.
To finish the proof we have to approximate $\operatorname{sech}^2\left(\frac{mz}{2}\right)$ and $\cosh(mz)$. We have 
\begin{align*}
\cosh(mz)&=\cosh\left(\beta_km+i\beta_k m^{\frac23}x\right)\\
&=\cosh(\beta_km)\cos\left(\beta_km^{\frac23}x\right)+i\sinh(\beta_k m)\sin\left(\beta_km^{\frac23}x\right)\\
&=\cosh(\beta_km)\left(1+O\left(\beta_km^{\frac23}\right)\right).
\end{align*}

This implies that 
\[\operatorname{sech}\left(\frac{mz}{2}\right)=\frac{1}{\cosh\left(\frac{mz}{2}\right)}=\frac{1}{\cosh\left(\frac{\beta_km}{2}\right)\left(1+O\left(\beta_km^{\frac23}\right)\right)},\]

yielding 

\[\operatorname{sech}^2\left(\frac{mz}{2}\right)=\operatorname{sech}^2\left(\frac{\beta_km}{2}\right)\left(1+O\left(\beta_km^{\frac23}\right)\right).\]

Thus we obtain

\[\mathcal{G}_{m,1}(z)=\frac z4\operatorname{sech}^2\left(\frac{\beta_km}{2}\right)\left(1+O\left(\beta_km^{\frac23}\right)\right)+O\left(\beta_k^2\left(1+\frac{1}{m^{\frac23}}\right)\cosh(\beta_km)\right)\]

\[=\frac z4\operatorname{sech}^2\left(\frac{\beta_km}{2}\right)+O\left(\beta_k^2m^{\frac23}\operatorname{sech}^2\left(\frac{\beta_km}{2}\right)\right)+O\left(\beta_k^2\cosh(\beta_km)\right).\]

We may now easily finish the proof distinguishing the cases on whether $\beta_k m$ is bounded or goes to $\infty$.

\end{proof}
We next turn to bounding $\mathcal{G}_{m,2}$.
\begin{lemma}\label{boundG2Lemma}
Assume that $|x|\leq 1$. Then we have as $n\rightarrow\infty$
$$
\mathcal{G}_{m,2} (q) \ll \frac{1}{\beta_k} e^{-\frac{5\pi^2}{4\beta_k}}.$$
\end{lemma}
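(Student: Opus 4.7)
The plan is to insert the pointwise estimate of Lemma~\ref{boundgLemma} into the definition of $\mathcal{G}_{m,2}(z)$ and to integrate the resulting bound. Setting $a := \beta_k(1 + x^2 m^{-2/3})$, one has $\re{1/z} = 1/a$, $|z| \asymp \beta_k$, and, since $|x| \le 1$, the key inequality $\beta_k \le a \le 2\beta_k$. From the identity $|\sinh(u)|^2 = \sinh^2(\re{u}) + \sin^2(\im{u})$ one obtains $|\sinh(2\pi^2 w/z)| \ge \sinh(2\pi^2 w/a)$, while $|e^{2\pi^2 w^2/z}| = e^{2\pi^2 w^2/a}$ and $|\cos(2\pi m w)| \le 1$. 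Lemma~\ref{boundgLemma} therefore bounds the integrand of $\mathcal{G}_{m,2}$ by a constant multiple of
\[
F(w) := \frac{\sin(\pi w)}{\beta_k\,\sinh(2\pi^2 w/a)}\,\exp\!\left(\frac{2\pi^2 w^2 - 4\pi^2(1-w)}{a}\right).
\]

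Next I would split $[0, 1/2]$ at $w = 1/4$. On $[1/4, 1/2]$, where $2\pi^2 w/a \to \infty$ as $\beta_k \to 0$, the elementary bound $\sinh(y) \ge \tfrac14 e^{y}$ combined with $\sin(\pi w) \le 1$ yields
\[
F(w) \ll \frac{1}{\beta_k}\exp\!\left(\frac{2\pi^2(w-1)(w+2)}{a}\right).
\]
The polynomial $(w-1)(w+2) = w^2 + w - 2$ is strictly increasing on $[0, 1]$ and attains its maximum on $[0, 1/2]$ of $-5/4$ at $w = 1/2$. Using $a \le 2\beta_k$, this contribution is $\ll \tfrac{1}{\beta_k} e^{-5\pi^2/(4\beta_k)}$, matching the target exactly.

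On $[0, 1/4]$, the uniform bounds $\sin(\pi w) \le \pi w$ and $\sinh(y) \ge y$ give $\sin(\pi w)/(\beta_k \sinh(2\pi^2 w/a)) \le a/(2\pi\beta_k) \le 1/\pi$. The remaining exponential is maximized at $w = 1/4$, where it equals $\exp(-23\pi^2/(8a)) \le \exp(-23\pi^2/(16\beta_k))$; since $23/16 > 5/4$, this contribution is negligible compared with $\tfrac{1}{\beta_k} e^{-5\pi^2/(4\beta_k)}$. Adding both pieces gives the desired estimate.

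The main obstacle is purely bookkeeping: the worst case $a = 2\beta_k$, which is actually attained (for instance at $m = 1$, $|x| = 1$), is precisely what dictates the constant $5/4$ rather than the larger $5/2$ one would obtain for $|x| \ll 1$ or large $m$; one must verify that the split point and all lower bounds on $\sinh$ continue to work uniformly throughout this extremal range.
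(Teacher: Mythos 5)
Your proof is correct and follows essentially the same route as the paper: both insert Lemma~\ref{boundgLemma}, reduce to the exponential $e^{2\pi^2 \mathrm{Re}(1/z)(w^2+w-2)}$ whose exponent is maximized at $w=\tfrac12$ with value $-\tfrac54$, and combine this with $\mathrm{Re}(1/z)\geq \tfrac{1}{2\beta_k}$ and $|z|\gg \beta_k$. The only difference is bookkeeping: the paper absorbs the $\sinh$ into the single uniformly bounded ratio $\big|\sin(\pi w)/(1-e^{-4\pi^2 w/z})\big|\ll 1$, whereas you split the interval at $w=\tfrac14$ and treat the small-$w$ singularity and the exponential decay separately.
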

\begin{proof}
By Lemma \ref{boundgLemma} we obtain that
$$
\mathcal{G}_{m,2} (z) \ll \frac{1}{|z|} \int_0^{\frac12} \left| \frac{\sin(\pi w)}{1- e^{-\frac{4\pi^2 w}{z}}}\right| e^{2\pi^2 \re{\frac{1}{z}} \left( w^2+w-2\right)} dw.
$$
It is not hard to see that
$$
\left| \frac{\sin(\pi w)}{1- e^{-\frac{4\pi^2 w}{z}}}\right| \ll 1.
$$

Moreover,
\begin{align*}
|z|&=\beta_k\sqrt{1+m^{-\frac23}x^2}\gg\beta_k,\\
\text{Re}\left(\frac1{z}\right)&\geq \frac1{2\beta_k}.
\end{align*}
The claim now follows, using that the maximum of $w^2+w-2$ on $[0,\frac12]$ is obtained for $w=\frac12$.
\end{proof}
Combining the above yields.
\begin{proposition}\label{MainCmProp}
Assume that $|x|\leq 1$. Then we have as $n\rightarrow \infty$
$$
\mathcal{C}_{m,k} \left( q \right) = \frac{z^{\frac k2+1}}{4(2\pi)^\frac k2}\operatorname{sech}^2\left(\frac{\beta_k m}{2}\right)e^{\frac{k\pi ^2}{6z}}+O\left(\beta_k^{\frac{k}{2}+2}m^{\frac23}\operatorname{sech}^2\left(\frac{\beta_km}{2}\right)e^{\pi\sqrt{\frac{kn}{6}}}\right).
$$
\end{proposition}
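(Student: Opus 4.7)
The plan is to combine equation (\ref{Cmk}) with the asymptotics already established in Lemma \ref{MainGm1} and Lemma \ref{boundG2Lemma}, after developing the asymptotic behavior of the modular prefactor $q^{k/24}/\eta^k(\tau)$ via the transformation formula in Lemma \ref{LemmaTrans}.

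First, with $\tau = iz/(2\pi)$, we have $-i\tau = z/(2\pi)$, so Lemma \ref{LemmaTrans} gives
\[
\eta(\tau) = \sqrt{\tfrac{2\pi}{z}}\,\eta\!\left(\tfrac{2\pi i}{z}\right).
\]
Since $\mathrm{Re}(1/z)\geq 1/(2\beta_k)\to\infty$, the infinite product defining $\eta(2\pi i/z)$ satisfies $\eta(2\pi i/z) = e^{-\pi^2/(6z)}\bigl(1+O(e^{-4\pi^2/z})\bigr)$. Combining this with the expansion $q^{k/24} = e^{-kz/24} = 1 + O(\beta_k)$ yields
\[
\frac{q^{k/24}}{\eta^k(\tau)} = \left(\frac{z}{2\pi}\right)^{k/2}e^{k\pi^2/(6z)}\bigl(1+O(\beta_k)\bigr).
\]

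Next I would substitute this, together with the expansions of $\mathcal{G}_{m,1}$ and $\mathcal{G}_{m,2}$, into (\ref{Cmk}). Lemma \ref{MainGm1} contributes the main term
\[
\left(\frac{z}{2\pi}\right)^{k/2}\!\!\cdot\frac{z}{4}\,\mathrm{sech}^2\!\left(\tfrac{\beta_k m}{2}\right)e^{k\pi^2/(6z)} \;=\; \frac{z^{k/2+1}}{4(2\pi)^{k/2}}\,\mathrm{sech}^2\!\left(\tfrac{\beta_k m}{2}\right)e^{k\pi^2/(6z)},
\]
together with an error of order $\beta_k^{k/2}\cdot\beta_k^2 m^{2/3}\mathrm{sech}^2\bigl(\tfrac{\beta_k m}{2}\bigr)\,|e^{k\pi^2/(6z)}|$. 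Because $|x|\leq 1$ gives $|z|\asymp \beta_k$ and $\mathrm{Re}(1/z)\leq 1/\beta_k$, we have $|e^{k\pi^2/(6z)}|\leq e^{k\pi^2/(6\beta_k)} = e^{\pi\sqrt{kn/6}}$, producing exactly the claimed error $O\!\bigl(\beta_k^{k/2+2}m^{2/3}\mathrm{sech}^2(\tfrac{\beta_k m}{2})e^{\pi\sqrt{kn/6}}\bigr)$.

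Finally, the contribution of $\mathcal{G}_{m,2}$ is controlled using Lemma \ref{boundG2Lemma}: its total size in $\mathcal{C}_{m,k}$ is at most
\[
\beta_k^{k/2-1}\exp\!\left(\frac{k\pi^2}{6\beta_k}-\frac{5\pi^2}{4\beta_k}\right),
\]
which, relative to the main term $\beta_k^{k/2+1}e^{k\pi^2/(6\beta_k)}$, is of order $\beta_k^{-2}e^{-5\pi^2/(4\beta_k)}$. This is exponentially small in $1/\beta_k$, hence negligible compared to the polynomial error $\beta_k^2 m^{2/3}$ from $\mathcal{G}_{m,1}$. The multiplicative $(1+O(\beta_k))$ factor from the modular prefactor is likewise absorbed into the stated error, since $\beta_k \ll \beta_k^2 m^{2/3}e^{\pi\sqrt{kn/6}}/(\text{main term})$ is trivially absorbed via the $e^{\pi\sqrt{kn/6}}$ bound. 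The only subtle bookkeeping step — and the one I would take most care with — is verifying that the $\mathcal{G}_{m,2}$ error remains subdominant uniformly in $k$, which as shown above reduces to comparing $k/6 - 5/4$ against the polynomial gain, and is handled by the exponential decay $e^{-5\pi^2/(4\beta_k)}$ dominating $\beta_k^{-2}$ regardless of $k$.
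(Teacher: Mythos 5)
Your proposal is correct and follows essentially the same route as the paper: it combines \eqref{Cmk} with Lemma \ref{MainGm1} and Lemma \ref{boundG2Lemma}, obtains the prefactor asymptotic $\frac{q^{k/24}}{\eta^k(\tau)} = \left(\frac{z}{2\pi}\right)^{k/2}e^{\frac{k\pi^2}{6z}}\left(1+O(\beta_k)\right)$ from Lemma \ref{LemmaTrans}, and then checks via $|z|\asymp\beta_k$ and $\operatorname{Re}\left(\frac{1}{z}\right)\leq\frac{1}{\beta_k}$ that the $\mathcal{G}_{m,1}$ error dominates while the exponentially small $\mathcal{G}_{m,2}$ contribution is negligible. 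You actually spell out the bookkeeping (in particular why $\mathcal{G}_{m,2}$ is subdominant) that the paper dismisses with ``it is not hard to see.''
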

\begin{proof}
Recall from (\ref{Cmk}) that
$$
\mathcal{C}_{m,k}(q)=\frac{q^\frac {k}{24}}{\eta^k(\tau)}\left(\mathcal{G}_{m,1}(z)+\mathcal{G}_{m,2}(z)\right).
$$
Lemma \ref{LemmaTrans} easily gives that
$$
\frac{q^\frac{k}{24}}{\eta^k(\tau)}=\left(\frac{z}{2\pi}\right)^\frac k2 e^{\frac{k\pi^2}{6z}}\left(1+O(\beta_k)\right).
$$
The functions $\mathcal{G}_{m, 1}$ and $\mathcal{G}_{m, 2}$ are now approximated using Lemma \ref{MainGm1} and Lemma \ref{boundG2Lemma}, respectively.
It is not hard to see that the main error term arises from approximation $\mathcal{G}_{m, 1}$. We thus obtain
\[
\mathcal{C}_{m, k}(q)= \frac{z^{\frac{k}{2}+1}}{4(2\pi)^{\frac{k}{2}}}e^{\frac{k\pi^2}{6z}}\operatorname{sech}^2\left(\frac{\beta_k m}{2}\right)+O\left(|z|^{\frac{k}{2}}\beta_k^2 m^{\frac23}\operatorname{sech}^2\left(\frac{\beta_k m}{2}\right)e^{\frac{\pi^2k}{6}\text{Re}\left(\frac1{z}\right)}\right).
\]
The claim follows now using that
\begin{align*}
|z|&\ll \beta_k,\\
\text{Re}\left(\frac1{z}\right)&\leq \frac1{\beta_k}=\frac{\sqrt{6n}}{\pi\sqrt{k}}.
\end{align*}
\end{proof}

\subsection{Bounds away from the dominant pole}

We next investigate the behavior of $\mathcal{C}_{m,k}$ away from the dominant cusp $q=1$. To be more precise, we consider the range $1\leq x\leq\frac{\pi m^{\frac13}}{\beta_k}$.
Let us start with the following lemma, which proof uses the same idea as in ~\cite{W1}.

\begin{lemma}\label{lem:far}
Assume that $\tau=u+iv\in \mathbb{H}$ with $Mv \leq |u| \leq \frac{1}{2}$ for $u>0$ and $v\rightarrow 0$, we have that
\[ \left|P(q)\right| \ll \sqrt{v} \exp \left[\frac{1}{v}\left(\frac{\pi}{12}-\frac{1}{2\pi}\left(1-\frac{1}{\sqrt{1+M^2}}\right)\right)\right].\]
\end{lemma}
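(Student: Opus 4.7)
My plan uses the $\eta$ modular transformation from Lemma~\ref{LemmaTrans} to transport the analysis from the singular cusp at $q=1$ (where $P$ blows up) to the cusp at $q=0$, where $P$ is close to $1$. Setting $\tau':=-1/\tau$ so that $v':=\mathrm{Im}(\tau')=v/|\tau|^2$, Lemma~\ref{LemmaTrans} gives $P(q)=q^{1/24}\sqrt{-i\tau}/\eta(\tau')$. Taking moduli, expanding $\eta(\tau')$ as its infinite product, and applying the reverse triangle inequality $|1-w|\ge 1-|w|$ factor by factor yields
\[
|\eta(\tau')|\ \ge\ e^{-\pi v'/12}\prod_{n\ge 1}(1-e^{-2\pi n v'})\ =\ \frac{e^{-\pi v'/12}}{P(e^{-2\pi v'})}.
\]

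Next I would apply Lemma~\ref{LemmaTrans} a second time, to $\eta(iv')$, to obtain the uniform upper bound $P(e^{-2\pi v'})\ll\sqrt{v'}\,e^{\pi/(12v')}$ valid for all $v'>0$ (the bound matches the asymptotic for $v'\to 0^+$ and is trivially satisfied for $v'$ bounded away from $0$ since $P(e^{-2\pi v'})=O(1)$ there). Substituting $v'=v/|\tau|^2$ (so $1/v'=(u^2+v^2)/v$) and collecting terms, the resulting bound simplifies to
\[
|P(q)|\ \ll\ \sqrt{v/|\tau|}\ \exp\!\left[\frac{\pi}{12v}\!\left(u^2+\frac{v^2}{u^2+v^2}\right)\right].
\]

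It remains to establish, for $|u|\in[Mv,1/2]$, the algebraic inequality
\[
u^2+\frac{v^2}{u^2+v^2}\ \le\ 1-\frac{6}{\pi^2}\!\left(1-\frac{1}{\sqrt{1+M^2}}\right)+o(1).
\]
Writing the left side as $1-g(u)$ with $g(u):=1-u^2-v^2/(u^2+v^2)$, a short derivative computation shows $g'(u)=2u\bigl[v^2/(u^2+v^2)^2-1\bigr]$ vanishes in the interior only at $u=\sqrt{v-v^2}$, where $g$ attains its maximum $(1-v)^2$. Hence the minimum of $g$ on $[Mv,1/2]$ is attained at an endpoint, with leading value $\min\{M^2/(1+M^2),\,3/4\}$. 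Setting $x:=\sqrt{1+M^2}$, the inequality at $u=Mv$ reduces to $\pi^2(x+1)/(6x)\ge 1$, which holds strictly since $\pi^2>6$; the inequality at $u=1/2$ is the trivial $3\pi^2/8\ge 1$.

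The main obstacle is reconciling the $\sqrt{v/|\tau|}$ prefactor produced by the argument with the $\sqrt{v}$ claimed: since $v\le|\tau|\le 1$, one has $\sqrt{v/|\tau|}\,\le\,v^{-1/4}(1+M^2)^{-1/4}\cdot\sqrt{v}$, contributing an additive $O(\log(1/v))$ to the exponent. For $M>0$ fixed, however, the strict inequality of the previous paragraph yields positive slack of order $1/v$ in the exponent, which comfortably dominates this logarithmic loss once $v$ is sufficiently small and completes the proof.
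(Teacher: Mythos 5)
Your argument is essentially correct for fixed $M>0$, but it takes a genuinely different route from the paper's. The paper expands $\log P(q)$ as the Lambert series $\sum_{m\ge 1} q^m/(m(1-q^m))$, applies the triangle inequality termwise while isolating the $m=1$ term, and extracts the saving $\frac{1}{2\pi v}\bigl(1-\frac{1}{\sqrt{1+M^2}}\bigr)$ from the elementary lower bound $|1-q|\ge 2\pi v\sqrt{1+M^2}+O(v^2)$; Lemma~\ref{LemmaTrans} is invoked only once, for $P(|q|)$, and the resulting estimate is uniform in $M$ with the exact prefactor $\sqrt{v}$. You instead apply the $\eta$-transformation at $\tau$ itself, reduce to $P(e^{-2\pi v'})$ with $v'=v/|\tau|^2$, apply the transformation a second time, and then optimize the explicit exponent $\frac{\pi}{12v}\bigl(u^2+\frac{v^2}{u^2+v^2}\bigr)$ over $[Mv,\frac12]$. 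I verified your intermediate bound (the $-\frac{\pi v}{12}$ from $q^{1/24}$ cancels the $v^2$ exactly as you claim) and your calculus: the interior critical point at $u=\sqrt{v-v^2}$ is a maximum, the endpoint reductions to $\pi^2(x+1)/(6x)\ge 1$ and to $\pi^2/8\ge 1$ are right (you wrote $3\pi^2/8\ge 1$ at $u=\frac12$; the correct constant is $\pi^2/8$, still $\ge 1$), and the prefactor satisfies $\sqrt{v/|\tau|}\le (1+M^2)^{-1/4}=v^{-1/2}(1+M^2)^{-1/4}\sqrt{v}$ (your exponent $v^{-1/4}$ is a slip, but the resulting additive $O(\log(1/v))$ in the exponent is as you state). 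Your route yields a sharper pointwise exponent; its cost is the multiplicative loss $v^{-1/2}$, which must be absorbed into the exponential slack, whereas the paper's proof has no such loss.

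The one point you must address: this lemma is applied in Proposition~\ref{far} with $M=m^{-\frac13}$, which tends to $0$ as $m\to\infty$, so "for $M>0$ fixed" does not cover the actual use. Your absorption step requires the slack, which for small $M$ is of size $\asymp M^2/v$, to dominate $\log(1/v)$; with $v\asymp n^{-\frac12}$ and $m\le\frac{1}{6\beta_k}\log n$ one gets $M^2/v\gg n^{\frac16}(\log n)^{-\frac23}\gg\log n$, so the application survives, but this verification (together with the check that the endpoint correction $\frac{\pi}{12}M^2v$ is $o(M^2/v)$) belongs in the proof. The paper's argument sidesteps this entirely because its error terms are uniform in $M$.
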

\begin{proof}
We rewrite
\begin{align*}
\log(P(q))&=-\sum\limits_{n=1}^\infty \log(1-q^n)=\sum\limits_{n=1}^\infty\sum\limits_{m=1}^\infty \frac{q^{nm}}{m}                 =\sum\limits_{m=1}^\infty \frac{q^m}{m(1-q^m)}.
\end{align*}
Therefore we may estimate
\begin{align*}
|\log(P(q))|&\leq\sum\limits_{m=1}^\infty \frac{|q|^m}{m|1-q^m|} \leq \frac{|q|}{|1-q|}-\frac{|q|}{1-|q|}+\sum\limits_{m=1}^\infty \frac{|q|^m}{m(1-|q|^m)}\\
                       &=\log(P(|q|))-|q|\left(\frac{1}{1-|q|}-\frac{1}{|1-q|}\right).
\end{align*}
We now split $u$ into $2$ ranges.
If $Mv \leq |u| \leq \frac{1}{4},$ then we have $\cos(2\pi u) \leq \cos(2 \pi Mv)$. Therefore
\[|1-q|^2=1-2e^{-2\pi v}\cos(2\pi u)+e^{-4\pi v}\geq 1-2e^{-2\pi v}\cos(2\pi Mv)+e^{-4\pi v}.\]
Taylor expanding around $v=0$ we find that
\begin{equation}
\label{eq:closefar}
|1-q|\geq 2\pi v \sqrt{1+M^2}+O\left(v^2\right).
\end{equation}
For $\frac{1}{4}\leq |u| \leq \frac{1}{2}$ we have $\cos(2\pi u) \leq 0$. Therefore
$$ |1-q| \geq 1> 2\pi v \sqrt{1+M^2}.$$ 
Hence, for all $Mv \leq |u| \leq \frac{1}{2}$,
\begin{equation}
\label{eq:|1-q|}
|1-q|\geq 2\pi v \sqrt{1+M^2}+O\left(v^2\right).
\end{equation}
Furthermore we have
\begin{equation}
\label{eq:1-q}
1 - |q| = 1-e^{-2\pi v} = 2\pi v +O\left(v^2\right).
\end{equation}
By Lemma~\ref{LemmaTrans}, we have
\begin{align*}
P(|q|)=\frac{e^{-\frac{2\pi v}{24}}}{\eta(iv)} = \sqrt{v} e^{\frac{\pi}{12v}}\left(1+O(v)\right).
\end{align*}
Thus
\begin{equation}
\label{eq:logP}
\log(P(|q|)) = \frac{\pi}{12v}+\frac{1}{2}\log(v) + O(v).
\end{equation}
Combining~\eqref{eq:|1-q|},~\eqref{eq:1-q}, and~\eqref{eq:logP}, we obtain
\begin{align*}
|\log(P(q))|&\leq \frac{\pi}{12v}+\frac{1}{2}\log(v) + O(v)-\frac{1}{2\pi v}\left(1-\frac{1}{\sqrt{1+M^2}}\right)+O(1)\\
                       &=\frac{1}{v}\left(\frac{\pi}{12}-\frac{1}{2\pi}\left(1-\frac{1}{\sqrt{1+M^2}}\right)\right)+\frac{1}{2}\log(v)+O(1).
\end{align*}
Exponentiating yields the desired result.
\end{proof}

We are now able to bound $|\mathcal{C}_{m,k}(q)|$ away from $q=1$.

\begin{proposition}\label{far}
Assume that $1\leq|x|\leq \frac{\pi m^\frac 13}{\beta}$. Then we have, as $n\rightarrow \infty$,
$$
|\mathcal{C}_{m,k}(q)| \ll n^{\frac{3-k}{4}} \exp \left(\pi \sqrt{\frac{kn}{6}} - \frac{\sqrt{6kn}}{8\pi} m^{-\frac{2}{3}}\right).
$$
\end{proposition}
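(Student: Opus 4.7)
My plan is to start from the representation
\[
\mathcal{C}_{m,k}(q) = 2\,\frac{q^{k/24}}{\eta^k(\tau)}\int_0^{1/2} g(w;\tau)\cos(2\pi m w)\,dw
\]
given at the start of Section~3, and use the identity $q^{k/24}/\eta^k(\tau)=P(q)^k$ to obtain
\[
|\mathcal{C}_{m,k}(q)| \le 2\,|P(q)|^k\int_0^{1/2}|g(w;\tau)|\,dw.
\]
The full exponential decay in the statement will be extracted from the prefactor $|P(q)|^k$ via Lemma~\ref{lem:far}, while the integral contributes only a polynomial factor in $n$ that is absorbed into the leading $n^{(3-k)/4}$.

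To bound $|P(q)|^k$, I write $\tau=u+iv$ with $v=\beta_k/(2\pi)$ and $|u|=\beta_k|x|m^{-1/3}/(2\pi)$, so that $M:=|u|/v=|x|m^{-1/3}\ge m^{-1/3}$. Applying Lemma~\ref{lem:far} with this $M$ and using $k\pi/(12v)=\pi\sqrt{kn/6}$ and $k/(2\pi v)=\sqrt{6kn}/\pi$, the task reduces to verifying
\[
1-\frac{1}{\sqrt{1+M^2}}\ge\frac{m^{-2/3}}{8}\qquad\text{for every }M\ge m^{-1/3}.
\]
A short case split handles this: for $M\le 1$ the identity
\[
1-\frac{1}{\sqrt{1+M^2}}=\frac{M^2}{\sqrt{1+M^2}(1+\sqrt{1+M^2})}\ge\frac{M^2}{2+\sqrt 2}>\frac{M^2}{4}\ge\frac{m^{-2/3}}{4}
\]
does the job, and for $M>1$ we simply use $1-1/\sqrt 2>1/4\ge m^{-2/3}/8$. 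Thus
\[
|P(q)|^k\ll\beta_k^{k/2}\exp\!\left(\pi\sqrt{\tfrac{kn}{6}}-\frac{\sqrt{6kn}}{8\pi}m^{-2/3}\right).
\]

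For the integral I apply Lemma~\ref{LemmaTrans}, exactly as in the derivation of Lemma~\ref{boundgLemma}, to rewrite
\[
g(w;\tau)=\frac{2\pi\sin(\pi w)}{z\sinh(2\pi^2 w/z)}\,e^{2\pi^2 w^2/z}\,\tilde g(w;\tau),
\]
where $\tilde g$ is the analogue of $g$ in the dual variables $q'=e^{-4\pi^2/z}$, $\zeta'=e^{4\pi^2 w/z}$, whose product factors are uniformly controlled on $(0,1/2]$ since $|\zeta'q'|\le e^{-2\pi^2\mathrm{Re}(1/z)}$ and the $(\zeta'^{-1}q')_\infty$-factor has all terms of modulus at most $1$. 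Using $|\sinh(X+iY)|\ge\sinh X$ and splitting $[0,1/2]$ into the ranges $2\pi^2 w\,\mathrm{Re}(1/z)\ge 1$ (where $1/\sinh$ is exponentially small and absorbs $e^{2\pi^2 w^2/z}$, giving $|g(w;\tau)|\ll|\sin(\pi w)|/|z|$) and $2\pi^2 w\,\mathrm{Re}(1/z)<1$ (where Taylor expansion of $\sinh$ together with $|e^{2\pi^2 w^2/z}|\le e^{1/2}$ gives $|g(w;\tau)|\ll\sqrt{1+M^2}$) yields
\[
\int_0^{1/2}|g(w;\tau)|\,dw\ll |z|^{-1}+\beta_k(1+M^2)^{3/2}.
\]
Combining with the bound on $|P(q)|^k$: for $M\le 1$ the first summand dominates and gives a bound of order $\beta_k^{k/2-1}\ll n^{(2-k)/4}\le n^{(3-k)/4}$; for $M>1$ the exponential decay $\exp(-(1-1/\sqrt 2)\sqrt{6kn}/\pi)$ in $|P(q)|^k$ is super-polynomial in $n$ and easily dominates the polynomial growth $\beta_k(1+M^2)^{3/2}\le \pi^3\beta_k^{-2}$, leading to the same conclusion. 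In either case,
\[
|\mathcal{C}_{m,k}(q)|\ll n^{(3-k)/4}\exp\!\left(\pi\sqrt{\tfrac{kn}{6}}-\frac{\sqrt{6kn}}{8\pi}m^{-2/3}\right).
\]

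The main technical obstacle will be establishing the uniform boundedness of $|\tilde g|$ on $(0,1/2]$ throughout the full range of $|x|$. For $|x|$ close to the upper endpoint $\pi m^{1/3}/\beta_k$ one has $\mathrm{Re}(1/z)$ of order $\beta_k/\pi^2$, so $|q'|$ is no longer exponentially small; a careful pairing of the $\zeta'$ and $\zeta'^{-1}$ factors in $\tilde g$—or an additional modular transformation taking one back into a region where the dual nome is small—will be required to close the argument uniformly.
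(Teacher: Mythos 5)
Your treatment of the factor $|P(q)|^k$ is essentially the paper's: both apply Lemma~\ref{lem:far} with $v=\beta_k/(2\pi)$ and reduce to the elementary inequality $1-(1+M^2)^{-1/2}\geq m^{-2/3}/8$, which you verify correctly (the paper simply takes the fixed value $M=m^{-1/3}$, valid for the whole range $1\le|x|\le\pi m^{1/3}/\beta_k$, which is marginally cleaner than your $x$-dependent $M$ but equivalent in outcome). The problem is your bound on $\int_0^{1/2}|g(w;\tau)|\,dw$, and it is a genuine gap, one you yourself flag in your closing paragraph. Applying the modular transformation of Lemma~\ref{LemmaTrans} is the right move \emph{near} the dominant pole (that is exactly Lemma~\ref{boundgLemma}, where $|x|\le1$ forces $\mathrm{Re}(1/z)\ge 1/(2\beta_k)$ and the dual nome $q'=e^{-4\pi^2/z}$ is exponentially small), but in the present range $\mathrm{Re}(1/z)=\beta_k^{-1}(1+x^2m^{-2/3})^{-1}$ can be as small as order $\beta_k/\pi^2$, so $|q'|\to 1$. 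The theta function sits in the \emph{denominator} of $g$, so the factors $1-\zeta'q'^{\,n}$ you need to control from below can degenerate, and your claimed uniform boundedness of $\tilde g$ fails precisely where you need it; the "additional modular transformation" you suggest would just undo the first one. This is not a technicality that a more careful pairing fixes: $P(q)$ and $\vartheta$ have essential singularities at every root of unity, and the $S$-transformation only resolves the one at $q=1$.

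The paper closes this gap by abandoning the modular transformation entirely for this estimate and instead invoking the Lerch-sum representation \eqref{CrankLerch}, which gives
$$
g(w;\tau)=1+\left(1-\zeta\right)\sum_{n\ge1}\frac{(-1)^nq^{\frac{n^2+n}{2}}}{1-\zeta q^n}+\left(1-\zeta^{-1}\right)\sum_{n\ge1}\frac{(-1)^nq^{\frac{n^2+n}{2}}}{1-\zeta^{-1}q^n}.
$$
Since $|\zeta|=1$, each denominator satisfies $|1-\zeta^{\pm1}q^n|\ge1-|q|^n\ge1-|q|$, and the Gaussian decay of $q^{(n^2+n)/2}$ yields the crude but uniform bound $|g(w;\tau)|\ll\beta_k^{-3/2}\ll n^{3/4}$ for \emph{all} $w$ and all $q$ on the circle $|q|=e^{-\beta_k}$. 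This costs only a polynomial factor, which is harmless because the exponential saving from $|P(q)|^k$ carries the estimate. That elementary device — trading the theta quotient for a Lerch sum whose denominators are trivially bounded away from the unit circle — is the idea your argument is missing, and without it (or a substitute of comparable uniformity) your proof does not close.
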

\begin{proof}
We have
by definition
$$
\mathcal{C}_{m,k}(q) = 2 P^k(q)  \int_{0}^{\frac{1}{2}} g(w;\tau) \cos(2\pi mw) dw.
$$
Note that by (\ref{CrankLerch})
$$
g\left( w; \tau\right) = 
1+ \left(1-\zeta\right) \sum_{n \geq 1} \frac{(-1)^nq^{\frac{n^2+n}{2}}}{1-\zeta q^n} + \left(1-\zeta^{-1}\right) \sum_{n \geq 1} \frac{(-1)^nq^{\frac{n^2+n}{2}}}{1-\zeta^{-1} q^n}.
$$

We thus may bound 
$$
g\left( w; \tau \right) \ll
\sum_{n\geq 1} \frac{|q|^{\frac{n^2+n}2}}{1-|q|^n} \ll
\frac{1}{1-|q|} \sum_{n\geq 1} e^{-\frac{\beta_k n^2}2} \ll
\beta_k^{-\frac32} \ll n^{\frac34}.
$$
Thus
$$
\left| \mathcal{C}_{m,k} (q) \right| \ll
\left| P^k (q)\right| n^{\frac34}.
$$

Using Lemma~\ref{lem:far} with $v=\frac{\beta_k}{2\pi}$, $u=\frac{\beta_k m^{-\frac{1}{3}}x}{2\pi}$, and $M = m^{-\frac{1}{3}}$, yields for $1\leq|x|\leq \frac{\pi m^\frac 13}{\beta_k}$,
\begin{align*}
|P(q)| &\ll n^{-\frac{1}{4}} \exp \left[\frac{2\pi}{\beta_k}\left(\frac{\pi}{12}-\frac{1}{2\pi}\left(1-\frac{1}{\sqrt{1+m^{-\frac{2}{3}}}}\right)\right)\right].
\end{align*}
Therefore
\begin{align*}
|\mathcal{C}_{m,k}(q)| &\ll n^{\frac{3-k}{4}} \exp \left[\frac{2\pi k}{\beta_k}\left(\frac{\pi}{12}-\frac{1}{2\pi}\left(1-\frac{1}{\sqrt{1+m^{-\frac{2}{3}}}}\right)\right)\right]\\
&\ll n^{\frac{3-k}{4}} \exp \left[\pi \sqrt{\frac{kn}{6}} - \frac{\sqrt{6kn}}{\pi} \left(1-\frac{1}{\sqrt{1+m^{-\frac{2}{3}}}}\right)\right]\\
&\ll n^{\frac{3-k}{4}} \exp \left(\pi \sqrt{\frac{kn}{6}} - \frac{\sqrt{6kn}}{8\pi} m^{-\frac{2}{3}}\right).
\end{align*}

\end{proof}

\section{The Circle Method}
In this section we use Wright's variant of the Circle Method and complete the proof of Theorem \ref{1.4} and thus the proof of Dyson's conjecture. We start by using Cauchy's Theorem to express $M_k$ as an integral of its generating function $\mathcal{C}_{m ,k}$: 
\begin{equation}\label{intrep}
M_k \left( m,n \right) = \frac{1}{2\pi i } \int_C \frac{\mathcal{C}_{m,k} (q)}{q^{n+1}} dq ,
\end{equation}
where the contour is the counterclockwise transversal of the circle $C := \{q\in\C\: ;\: |q| = e^{-\beta_k}\}$. Recall that $z=\beta_k(1+ixm^{-\frac 13})$. Changing variables we may write
$$
M_k(m,n)=\frac{\beta_k}{2\pi m^\frac 13}\int\limits_{|x|\leq \frac{\pi m^\frac 13}{\beta_k}} \mathcal{C}_{m,k}(e^{-z})e^{nz} dx.
$$
We split this integral into two pieces
$$
M_k (m,n) = M + E
$$
with
$$
M:=\frac{\beta_k}{2\pi m^\frac 13} \int_{|x|\leq 1} \mathcal{C}_{m,k}  \left(e^{-z} \right)e^{nz} dx,
$$
$$
E:=\frac{\beta_k}{2\pi m^\frac 13} \int_{1\leq |x|\leq \frac{\pi m^{\frac13}}{\beta_k}} \mathcal{C}_{m,k}  \left(e^{-z} \right)e^{nz} dx.
$$
In the following we show that $M$ contributes to the asymptotic main term whereas $E$ is part of the error term.
\subsection{Approximating the main term}
The goal of this section is to determine the asymptotic behavior of $M$. We show
\begin{proposition}\label{boundMLemma}
We have
$$
M =\frac{\beta_k}{4}\operatorname{sech}^2\left(\frac{\beta_k m}{2}\right)p_k(n)\left(1+O\left( \frac{m^\frac 13}{n^\frac 14} \right)\right).
$$
\end{proposition}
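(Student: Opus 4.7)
My plan is to substitute Proposition \ref{MainCmProp} into the integral defining $M$, recognize the resulting main integral as the circle-method expression for $p_k(n)$ dressed with an extra factor of $z\approx\beta_k$, and verify that every error contribution fits within $O(m^{1/3}/n^{1/4})$.

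First, by Proposition \ref{MainCmProp} I decompose $\mathcal{C}_{m,k}(e^{-z}) = A(z) + R(z)$ uniformly on $|x|\le 1$, where
\[
A(z) := \frac{z^{\frac{k}{2}+1}}{4(2\pi)^{\frac{k}{2}}}\operatorname{sech}^2\!\Big(\frac{\beta_k m}{2}\Big)e^{\frac{k\pi^2}{6z}},
\qquad
R(z)\ll \beta_k^{\frac{k}{2}+2}m^{\frac{2}{3}}\operatorname{sech}^2\!\Big(\frac{\beta_k m}{2}\Big)e^{\pi\sqrt{kn/6}},
\]
and split $M = M_A + M_R$. Using $|e^{nz}|=e^{\pi\sqrt{kn/6}}$ and the length-$2$ interval in $x$, a trivial bound gives
\[
|M_R|\ll \beta_k^{\frac{k}{2}+3}m^{\frac{1}{3}}\operatorname{sech}^2\!\Big(\frac{\beta_k m}{2}\Big)e^{2\pi\sqrt{kn/6}}.
\]
Comparing with the Hardy--Ramanujan asymptotic $p_k(n)\asymp \beta_k^{(k+3)/2}\,e^{2\pi\sqrt{kn/6}}$ yields the precise scaling $|M_R|\big/\bigl(\tfrac{\beta_k}{4}\operatorname{sech}^2(\tfrac{\beta_km}{2})p_k(n)\bigr) = O(\beta_k^{1/2}m^{1/3}) = O(m^{1/3}/n^{1/4})$, matching the target error.

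Next, I evaluate $M_A$ by factoring $z^{\frac{k}{2}+1}=\beta_k(1+ixm^{-1/3})\,z^{\frac{k}{2}}$ and applying Lemma \ref{LemmaTrans} in the form $(z/(2\pi))^{k/2}e^{k\pi^2/(6z)}=P^k(e^{-z})(1+O(\beta_k))$ to obtain
\[
M_A = \tfrac{\beta_k}{4}\operatorname{sech}^2\!\Big(\tfrac{\beta_k m}{2}\Big)\bigl(1+O(\beta_k)\bigr)\cdot\tfrac{\beta_k}{2\pi m^{1/3}}\int_{|x|\le 1}\bigl(1+ixm^{-\frac{1}{3}}\bigr)P^k(e^{-z})e^{nz}\,dx.
\]
The $ixm^{-1/3}$ term vanishes to leading order by odd symmetry at the Gaussian saddle (of curvature $n\beta_k\asymp\sqrt n$ and width $n^{-1/4}$); standard moment analysis shows its net contribution is $O(1/\sqrt n)$ relative, safely absorbed in $O(m^{1/3}/n^{1/4})$. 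Cauchy's theorem applied to $P^k(q)=\sum_n p_k(n)q^n$ along $|q|=e^{-\beta_k}$ gives
\[
p_k(n) = \tfrac{\beta_k}{2\pi m^{1/3}}\int_{|x|\le \pi m^{1/3}/\beta_k}P^k(e^{-z})e^{nz}\,dx,
\]
and the tail $1\le|x|\le\pi m^{1/3}/\beta_k$ is exponentially smaller than $p_k(n)$ by Lemma \ref{lem:far} applied with $v=\beta_k/(2\pi)$, $M=m^{-1/3}$ (the same computation as in the proof of Proposition \ref{far}, now applied to $P^k$ alone, which only makes the saving easier since we drop the $g(w;\tau)$ factor). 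Assembling these estimates proves the proposition.

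The main obstacle is the error-term bookkeeping: one must check that the dominant error arises from $M_R$ rather than from the $O(\beta_k)$ modular approximation, the $O(n^{-1/2})$ odd-in-$x$ cancellation, or the exponentially small tail. This is confirmed by the explicit scaling $\beta_k^{k/2+3}m^{1/3}/\bigl(\beta_k \cdot \beta_k^{(k+3)/2}\bigr) = \beta_k^{1/2}m^{1/3} = m^{1/3}/n^{1/4}$, which also explains why the exponent $\tfrac{1}{3}$ on $m$ is exactly what the argument produces.
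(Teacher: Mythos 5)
Your argument is correct, and your handling of the error term $M_R$ coincides with the paper's: the same trivial bound $|M_R|\ll\beta_k^{\frac k2+3}m^{\frac13}\operatorname{sech}^2(\frac{\beta_k m}{2})e^{\pi\sqrt{2kn/3}}$, the same comparison with $p_k(n)\asymp\beta_k^{\frac{k+3}{2}}e^{\pi\sqrt{2kn/3}}$, and the same identification of this as the dominant relative error $O(\beta_k^{1/2}m^{1/3})=O(m^{1/3}n^{-1/4})$. Where you genuinely diverge is the evaluation of the main integral. The paper substitutes $v=z/\beta_k$, recognizes the resulting integral $P_{\frac k2+1,k}$ as a truncated Hankel loop, completes the contour to identify it with the Bessel function $I_{-\frac k2-2}(\pi\sqrt{2kn/3})$ (Lemma \ref{4.2}, which requires bounding the auxiliary segments $\gamma_2,\gamma_3,\gamma_4$), and then applies the Bessel asymptotic together with the Hardy--Ramanujan formula for $p_k(n)$. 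You instead undo the modular transformation to recover $P^k(e^{-z})$, complete the major arc to the full circle so that Cauchy's theorem produces $p_k(n)$ exactly, and discard the added minor arc via Lemma \ref{lem:far}. Your route dispenses with Lemma \ref{4.2}, the Bessel asymptotics, and the cited asymptotic expansion of $p_k(n)$; its cost is two points that you assert but do not fully justify. First, the factor $1+O(\beta_k)$ coming from the modular transformation depends on $x$, so extracting it from an oscillatory integral requires $\int_{|x|\le1}|P^k(e^{-z})e^{nz}|\,dx\asymp|\int_{|x|\le1}P^k(e^{-z})e^{nz}\,dx|$; this does hold, because the integrand concentrates on $|x|\ll\beta_k^{1/2}m^{1/3}$ where its phase is essentially constant, but it deserves a sentence. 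Second, odd symmetry alone only shows that the $ixm^{-1/3}$ contribution is real (the integrand at $-x$ is the complex conjugate of that at $x$), not that it is small; the quantitative saving comes from the first-moment bound $|x|m^{-1/3}\ll\beta_k^{1/2}\asymp n^{-1/4}$ on the effective saddle range, which already suffices for the target error even without any cancellation. Neither issue is a gap in the approach, only in the write-up.
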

A key step for proving this proposition is the investigation of
$$
P_{s,k} := \frac{1}{2\pi i}\int\limits_{1-im^{-\frac 13}}^{1+im^{-\frac 13}} v^s e^{\pi\sqrt{\frac{kn}{6}}\left(v+\frac 1v\right)}dv
$$
for $s>0$. These integrals may be related to Bessel functions. Denoting by $I_s$ the usual $I$-Bessel function of order $s$, we have.
\begin{lemma}\label{4.2}
As $n\rightarrow\infty$ 
$$
P_{s,k} = I_{-s-1} \left(\pi\sqrt{\frac{2kn}{3}} \right) +O\left( \exp\left(\pi\sqrt{\frac{kn}{6}}\left(1+\frac{1}{1+m^{-\frac 23}}\right)\right)\right).
$$
\end{lemma}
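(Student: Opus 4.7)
\emph{Proof plan.} The strategy is to recognise $P_{s,k}$ as a truncation of Schl\"afli's Hankel-contour representation of the modified Bessel function
\[
I_\nu(z)=\frac{1}{2\pi i}\int_{\mathcal{H}}t^{-\nu-1}e^{\frac{z}{2}\left(t+\frac{1}{t}\right)}dt,
\]
where $\mathcal{H}$ enters from $-\infty$ below the negative real axis, wraps counterclockwise around $0$, and returns to $-\infty$ above. Taking $\nu=-s-1$ and $z=\pi\sqrt{2kn/3}$, so that $z/2=\pi\sqrt{kn/6}$, the Schl\"afli integrand is exactly $t^s e^{\pi\sqrt{kn/6}(t+1/t)}$, matching that of $P_{s,k}$.

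I would then deform $\mathcal{H}$ to the counterclockwise boundary of the half-strip $\{x+iy:x\le 1,\,|y|\le 1\}$, decomposed as the concatenation of five oriented pieces: a lower horizontal tail $\{x-i:x\in(-\infty,1]\}$, a lower vertical bridge $\{1+it:t\in[-1,-m^{-1/3}]\}$, the central segment $\{1+it:t\in[-m^{-1/3},m^{-1/3}]\}$ (whose contribution is exactly $P_{s,k}$), an upper vertical bridge $\{1+it:t\in[m^{-1/3},1]\}$, and an upper horizontal tail $\{x+i:x\in(-\infty,1]\}$. The integrand is holomorphic off the branch cut on $(-\infty,0]$ and decays as $\mathrm{Re}(v)\to-\infty$, so the deformation is valid, and Schl\"afli's formula expresses $P_{s,k}$ as $I_{-s-1}(\pi\sqrt{2kn/3})$ minus the four extraneous integrals.

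To bound those four pieces I would exploit the saddle geometry of $v+1/v$, whose unique saddle on $\{\mathrm{Re}(v)>0\}$ is at $v=1$, with value $2$ and vertical steepest-descent direction. On either vertical bridge, $v=1+it$ yields $\mathrm{Re}(v+1/v)=1+1/(1+t^2)$, which is decreasing in $|t|$ and hence maximised at the inner endpoint $|t|=m^{-1/3}$ with value $1+1/(1+m^{-2/3})$; since $|v|\le\sqrt{2}$ on the segment and its length is at most $1$, this piece contributes $O\!\left(\exp(\pi\sqrt{kn/6}(1+1/(1+m^{-2/3})))\right)$. On either horizontal tail, $v=x\pm i$ gives $\mathrm{Re}(v+1/v)=x+x/(x^2+1)$, whose derivative $1+(1-x^2)/(x^2+1)^2$ is strictly positive on $\mathbb{R}$, so the quantity is globally increasing and bounded by $3/2$ on $(-\infty,1]$. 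For $m\ge 1$ this is dominated by $1+1/(1+m^{-2/3})$, and the exponential decay $e^{\pi\sqrt{kn/6}x}$ as $x\to-\infty$ absorbs the polynomial factor $|v|^s=(x^2+1)^{s/2}$.

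The main subtlety I expect is the choice of turn-height for the horizontal tails. The naive temptation is to use horizontal extensions directly at heights $\pm m^{-1/3}$, matching the endpoints of the central segment; this fails badly, because on $v=x\pm im^{-1/3}$ the function $\mathrm{Re}(v+1/v)=x(1+1/(x^2+m^{-2/3}))$ develops a spurious interior maximum of size $\sim m^{1/3}/2$ near $x\sim m^{-1/3}$, which would obliterate the target error. The vertical bridges sidestep this by raising the horizontal portion to height $\pm 1$, where $x+x/(x^2+1)$ is globally monotone; their own contribution is already of the desired order, so nothing is lost.
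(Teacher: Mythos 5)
Your argument is correct and reaches the stated estimate, and it shares the paper's skeleton: write $I_{-s-1}\bigl(\pi\sqrt{2kn/3}\bigr)$ via the Hankel/Schl\"afli loop integral, deform the loop so that the segment from $1-im^{-1/3}$ to $1+im^{-1/3}$ appears as $P_{s,k}$, and bound the remaining pieces by absolute values. But your contour is genuinely different from the paper's, and the difference matters. The paper runs its long horizontal stretches at heights $\pm m^{-1/3}$ from $\mathrm{Re}(v)=-1$ to $\mathrm{Re}(v)=1$ (its $\gamma_2$, $\gamma_2'$), with short vertical jogs at $\mathrm{Re}(v)=-1$ and tails at heights $\pm\tfrac12 m^{-1/3}$, and then asserts that $t+\tfrac{t}{t^2+m^{-2/3}}$ attains its maximum on $[-1,1]$ at $t=1$. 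That assertion is precisely the failure you flag as the ``naive temptation'': at $t=m^{-1/3}$ this quantity equals $m^{-1/3}+\tfrac12 m^{1/3}$, which for large $m$ far exceeds the value $1+\tfrac{1}{1+m^{-2/3}}$ at $t=1$, because the contour passes within distance $\sqrt{2}\,m^{-1/3}$ of the essential singularity at the origin on the side where $\mathrm{Re}(1/v)$ is large and positive. So the paper's absolute-value bound on $\gamma_2$ does not go through as written, whereas your contour --- vertical bridges on $\mathrm{Re}(v)=1$ from heights $\pm m^{-1/3}$ up to $\pm 1$, where $\mathrm{Re}(v+1/v)=1+\tfrac{1}{1+t^2}$ is monotone in $|t|$, followed by horizontal tails at heights $\pm 1$, where $x+\tfrac{x}{x^2+1}$ is globally increasing and bounded by $\tfrac32\le 1+\tfrac{1}{1+m^{-2/3}}$ for $m\ge 1$ --- stays uniformly away from the origin and yields the claimed error by elementary monotonicity. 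The only point to spell out in a final write-up is the legitimacy of the deformation for non-integer $s$ (the branch cut of $v^s$ along $(-\infty,0]$ lies inside both contours, and the integrand decays like $e^{\pi\sqrt{kn/6}\,\mathrm{Re}(v)}$ as $\mathrm{Re}(v)\to-\infty$), which you mention but should state explicitly. In short: same overall route, but your choice of contour repairs a step that is incorrect in the paper's own proof while producing the same error term.
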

\begin{proof}
We use the following loop integral representation for the $I$-Bessel function \cite{AS} ($x>0$) 
\begin{equation}\label{Ar}
I_\ell (2x) = \frac{1}{2\pi i} \int_\Gamma t^{-\ell-1} e^{x \left( t + \frac{1}{t} \right)} dt,
\end{equation}
where the contour $\Gamma$ starts in the lower half plane at $-\infty$, surrounds the origin counterclockwise and then returns to $-\infty$ in the upper half-plane. We choose for $\Gamma$ the piecewise linear path that consists of the line segments
\begin{align*}
&\gamma_4:\,\left(-\infty-\frac{i}{2m^\frac 13},-1-\frac{i}{2m^\frac 13}\right),\quad  \gamma_3:\,\left(-1-\frac{i}{2m^\frac 13},-1-\frac{i}{m^\frac 13}\right),\\
&\gamma_2:\,\left(-1-\frac{i}{m^\frac 13},1-\frac{i}{m^\frac 13}\right),\quad \gamma_1:\left(1-\frac{i}{m^\frac 13},1+\frac{i}{m^\frac 13}\right),\end{align*}
which are then followed by the corresponding mirror images $\gamma_2',\gamma_3',$ and $\gamma_4'$. Note that $P_{s,k}=\int_{\gamma_1}$. Thus, to finish the proof, we have to bound the integrals along $\gamma_4,\gamma_3,$ and $\gamma_2 -$  the corresponding mirror images follow in the same way.

First
\begin{align*}
\int\limits_{\gamma_4}
&\ll \int_{-\infty}^1\left|\exp\left(\pi \sqrt{\frac{kn}{6}}\left(t-\frac{im^{-\frac13}}{2}+\frac1{t-\frac{im^{-\frac13}}{2}}\right)\right)\right|\left|t-\frac{im^{-\frac13}}{2}\right|^s dt\\
&\ll \int\limits_{1}^{\infty} e^{-\pi\sqrt{\frac{kn}{6}}t}\left| t+\frac{im^{-\frac 13}}{2}\right|^s dt\\
&\ll \int_1^\infty t^s e^{-\pi \sqrt{\frac{kn}{6}}t}dt\ll n^{-\frac{s+1}{2}}\Gamma\left(s+1; \pi\sqrt{\frac{kn}{6}}\right)\ll n^{-\frac 12} e^{-\pi\sqrt\frac{kn }{6}},
\end{align*}
using \eqref{incompletebound}.

Next
$$
\int\limits_{\gamma_3}\ll m^{-\frac 13}\int\limits_{\frac 12}^1 \exp\left(-\pi\sqrt{\frac{kn}{6}}\left(1+\frac1{1+m^{-\frac23}t^2}\right)\right)\left|1+im^{-\frac 13} t\right|^s dt\ll e^{-\pi\sqrt{\frac{kn}{6}}}.
$$
Finally
\begin{align*}
&\int\limits_{\gamma_2} \ll\int\limits_{-1}^1 \exp\left(\pi\sqrt{\frac{kn}{6}}\left(t+\frac{t}{t^2+m^{-\frac23}}\right)\right)\left|t-im^{-\frac 13}\right|^s dt\\
&\ll \exp\left(\pi\sqrt{\frac{kn}{6}}\left(1+\frac{1}{1+m^{-\frac 23}}\right)\right),
\end{align*}
where we used that $t+\frac{t}{t^2 rm^{-\frac23}}$ obtains its maximum at $t=1$.
This finishes the proof.
\end{proof}
We now turn to the proof of Proposition \ref{boundMLemma}. 

\begin{proof}[Proof of Proposition \ref{boundMLemma}]
Using Proposition \ref{MainCmProp} and making a change of variables, we obtain by Lemma \ref{4.2}
\begin{multline*}
 M= \frac{\beta_k^{\frac{k}{2}+2}}{4(2\pi)^{\frac{k}{2}}}\operatorname{sech}^2\left(\frac{\beta_k m}{2}\right)P_{\frac{k}{2}+1,k} + O\left(\beta_k^{\frac{k}{2}+3}m^{\frac{1}{3}}\operatorname{sech}^2\left(\frac{\beta_k m}{2}\right)  e^{\pi \sqrt{\frac{2kn}{3}}}\right) \\
     =      \frac{\beta_k^{\frac{k}{2}+2}}{4(2\pi)^{\frac{k}{2}}}\operatorname{sech}^2\left(\frac{\beta_k m}{2}\right)\Bigg(I_{-\frac{k}{2}-2}\left(\pi \sqrt{\frac{2kn}{3}}\right)\\
      + O\left(\exp\left(\pi \sqrt{\frac{kn}{6}}\left(1 + \frac{1}{1 + m^{-\frac{2}{3}}}\right)\right)\right) \Bigg)
  + O\left(\beta_k^{\frac{k}{2}+3}m^{\frac{1}{3}}\operatorname{sech}^2\left(\frac{\beta_k m}{2}\right)e^{\pi\sqrt{\frac{2kn}{3}}}\right).
\end{multline*}
Using the Bessel function asymptotic (see (4.12.7) in \cite{AAR})
$$
I_{\ell}(x)= \frac{e^x}{\sqrt{2 \pi x}} + O\left(\frac{e^x}{x^{\frac{3}{2}}}\right)
$$
yields
\begin{multline*}
 M = \frac{\beta_k^{\frac{k}{2}+2}}{4(2\pi)^{\frac{k}{2}}}\operatorname{sech}^2\left(\frac{\beta_k m}{2}\right)\left( \frac{ e^{\pi \sqrt{\frac{2kn}{3}}}}{\pi \sqrt{2}\left(\frac{2kn}{3}\right)^\frac{1}{4}} + O\left(\frac{ e^{\pi \sqrt{\frac{2kn}{3}}}}{n^\frac{3}{4}} \right) \right. \\ 
+ \left. O\left(\exp\left(\pi \sqrt{\frac{kn}{6}}\left(1 + \frac{1}{1 + m^{-\frac{2}{3}}}\right)\right)\right) \right) + O\left(\beta_k^{\frac{k}{2}+3}m^{\frac{1}{3}}\operatorname{sech}^2\left(\frac{\beta_k m}{2}\right)e^{\pi \sqrt{\frac{2kn}{3}}}\right).
\end{multline*}
It is not hard to see that the last error term is the dominant one.
Thus
\[
 M= \frac{\beta_k^{\frac{k}{2}+2}}{4(2\pi)^{\frac{k}{2}}}\operatorname{sech}^2\left(\frac{\beta_k m}{2}\right)\frac{e^{\pi \sqrt{\frac{2 k n}{3}}}}{\pi \sqrt{2}\left(\frac{2 k n}{3}\right)^{\frac{1}{4}}}
 \left(1+O\left(m^{\frac13}n^{-\frac14}\right)\right).
\]
Using that \cite{HR, RZ}
$$
p_k(n) = 2\left(\frac{k}{3}\right)^{\frac{1+k}{4}}(8n)^{-\frac{3+k}{4}} e^{\pi \sqrt{\frac{2kn}{3}}}\left(1+O\left(\frac1{\sqrt{n}}\right)\right)
$$
now easily gives the claim. 
\end{proof}

\subsection{The error arc.} 

We finally bound $E$ and show that it is exponentially smaller than $M$.
The following proposition then immediately implies Theorem \ref{1.4}.

\begin{proposition}
\label{integralE}
As $n\rightarrow \infty$
$$
E \ll n^{\frac{3-k}{4}} \exp \left(\pi \sqrt{\frac{2kn}{3}} - \frac{\sqrt{6kn}}{8\pi} m^{-\frac{2}{3}}\right).
$$
\end{proposition}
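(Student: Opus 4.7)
The plan is to combine the far-from-pole estimate (Proposition \ref{far}) with a trivial bound on the modulus of the exponential factor and the trivial length estimate for the interval of integration. This is a standard circle-method minor arc estimate and should not require any subtle cancellation; the entire saving over the main term has already been packaged into Proposition \ref{far}.

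More precisely, I would proceed as follows. On the range $1 \leq |x| \leq \frac{\pi m^{1/3}}{\beta_k}$ we have $|q| = e^{-\beta_k}$ so that Proposition \ref{far} directly gives
\[
\left|\mathcal{C}_{m,k}\!\left(e^{-z}\right)\right| \ll n^{\frac{3-k}{4}} \exp\!\left(\pi\sqrt{\tfrac{kn}{6}} - \tfrac{\sqrt{6kn}}{8\pi}\, m^{-\frac{2}{3}}\right).
\]
Since $\mathrm{Re}(z) = \beta_k$, we have the pointwise bound $|e^{nz}| = e^{n\beta_k} = e^{\pi \sqrt{kn/6}}$. The length of the $x$-interval is at most $\frac{2\pi m^{1/3}}{\beta_k}$, so multiplying the integrand by the prefactor $\frac{\beta_k}{2\pi m^{1/3}}$ yields an overall factor of $O(1)$. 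Assembling these three estimates gives
\[
E \ll n^{\frac{3-k}{4}} \exp\!\left(2\pi\sqrt{\tfrac{kn}{6}} - \tfrac{\sqrt{6kn}}{8\pi}\, m^{-\frac{2}{3}}\right),
\]
and since $2\pi\sqrt{kn/6} = \pi\sqrt{2kn/3}$ this is exactly the claimed bound.

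There is essentially no obstacle: the hard work is already done in Proposition \ref{far}, whose proof via Wright's splitting into "close" and "far" ranges of $u$ produced the crucial saving factor $\exp\!\left(-\frac{\sqrt{6kn}}{8\pi}\, m^{-2/3}\right)$. The only mildly delicate point is to verify that the bookkeeping matches; in particular, the algebraic identity $2\pi\sqrt{kn/6}=\pi\sqrt{2kn/3}$ is what allows the exponent $\pi \sqrt{kn/6}$ contributed by $|e^{nz}|$ to combine with the exponent $\pi \sqrt{kn/6}$ contributed by $|\mathcal{C}_{m,k}|$ to produce the expected main-term-scale exponent $\pi \sqrt{2kn/3}$, which is then beaten by the $\exp(-\frac{\sqrt{6kn}}{8\pi} m^{-2/3})$ saving when compared against the main term $M$ (asymptotic to $\beta_k \operatorname{sech}^2(\beta_k m/2) p_k(n)/4$) from Proposition \ref{boundMLemma}.

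Combining the proposition with the asymptotic of Proposition \ref{boundMLemma} then finishes Theorem \ref{1.4}: the exponential saving $\exp(-\frac{\sqrt{6kn}}{8\pi} m^{-2/3})$ dominates any polynomial factors for $|m| \leq \frac{1}{6\beta_k}\log n$, since in this range $\sqrt{n}\, m^{-2/3}$ still tends to infinity faster than $\log n$, so $E$ is exponentially smaller than $M$ and absorbed into the error term $O(\beta_k^{1/2} |m|^{1/3})$ already present in $M$.
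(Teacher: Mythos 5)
Your proof is correct and follows exactly the paper's argument: apply Proposition \ref{far} to the integrand, bound $|e^{nz}|=e^{n\beta_k}=e^{\pi\sqrt{kn/6}}$, and note that the interval length cancels the prefactor $\frac{\beta_k}{2\pi m^{1/3}}$, so the two exponents combine to $\pi\sqrt{2kn/3}$. No differences worth noting.
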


\begin{proof}
Using Proposition~\ref{far}, we may bound
\begin{align*}
E&\ll \frac{\beta_k}{m^{\frac13}}\int_{1\leq x\leq\frac{\pi m^{\frac13}}{\beta_k}}  n^{\frac{3-k}{4}} \exp \left(\pi \sqrt{\frac{kn}{6}} - \frac{\sqrt{6kn}}{8\pi} m^{-\frac{2}{3}}\right) e^{\beta_k n}  dx\\
&\ll n^{\frac{3-k}{4}} \exp \left(\pi \sqrt{\frac{2kn}{3}} - \frac{\sqrt{6kn}}{8\pi} m^{-\frac{2}{3}}\right).
\end{align*}
\end{proof}

\section{Numerical data}
We illustrate our results in $2$ tables.

\begin{center}
  \begin{tabular}{ c | c | c | c  }
    $n$ & $M\left(0,n\right)$ & $\widetilde{M}\left(0,n\right)$ & $\frac{M\left(0,n\right)}{\widetilde{M}\left(0,n\right)}$  \\ \hline
   $20$ & $41$ & $\sim 45$ & $\sim 0.912$ \\ 
   $50$ & $8626$ & $\sim 9261$ & $\sim 0.931$ \\ 
   $500$ & $3.228743492 \cdot 10^{19}$ & $\sim 3.298285542\cdot 10^{19}$ & $\sim 0.979$ \\ 
   $1000$ & $2.403603986\cdot 10^{29}$ & $\sim 2.439699707 \cdot 10^{29}$ & $\sim 0.985$ \\ 
  \end{tabular}
\end{center}

\begin{center}
  \begin{tabular}{ c | c | c | c  }
    $n$ & $M\left(1,n\right)$ & $\widetilde{M}\left(1,n\right)$ & $\frac{M\left(1,n\right)}{\widetilde{M}\left(1,n\right)}$  \\ \hline
   $20$ & $38$ & $\sim 44$ & $\sim 0.863$ \\ 
   $50$ & $8541$ & $\sim 9185$ & $\sim 0.930$ \\ 
   $500$ & $3.226300403 \cdot 10^{19}$ & $\sim 3.295574297\cdot 10^{19}$ & $\sim 0.979$ \\ 
   $1000$ & $2.402671309\cdot 10^{29}$ & $\sim 2.438696696 \cdot 10^{29}$ & $\sim 0.985$ \\ 
  \end{tabular}
\end{center}

where we set $\widetilde{M}(m,n) := \frac{\beta}4 \text{sech}^2 (\frac{\beta m}{2}) p(n)$.

\end{document}